\newtheorem{theorem}{Theorem}[section]
\newtheorem{lemma}[theorem]{Lemma}
\newtheorem{corollary}[theorem]{Corollary}
\theoremstyle{definition}
\newtheorem{definition}[theorem]{Definition}
\newtheorem{example}[theorem]{Example}
\newtheorem{conjecture}[theorem]{Conjecture}
\newcommand{\NN}{ \ensuremath{\mathbb{N}}}
\newcommand{\FF}{ \ensuremath{\mathbb{F}}}
\newcommand{\rank}{\mathop{\mathrm{rank}}}
\newcommand{\aaa}{\mathbf{a}}
\newcommand{\st}{\mathrm{st}}
\def\cocoa{{\hbox{\rm C\kern-.13em o\kern-.07em C\kern-.13em o\kern-.15em A}}}
\newcommand{\cont}{\ensuremath{\mathcal{C}_{p \to q}}}
\newcommand{\bcont}{\ensuremath{\mathcal{C}^{(b)}_{p \to q}}}
\newcommand{\marrow}{\marginpar{$\longleftarrow$}}
\newcommand{\martina}[1]{\textsc{\textcolor{blue}{Martina says:}} \marrow \textsf{#1}}
\begin{document}

\title{Lefschetz properties of balanced 3-polytopes}


\author[D. Cook II]{David Cook II}
\address{David Cook II,
Department of Mathematics \& Computer Science,
Eastern Illinois University,
Charleston, IL 61920 USA}
\email{dwcook@eiu.edu}

\author[M. Juhnke-Kubitzke]{Martina Juhnke-Kubitzke}
\address{Martina Juhnke-Kubitzke,
Institut f\"ur Mathematik, Universit\"at Osnabr\"uck, 49069 Osnabr\"uck, Germany}
\email{juhnke-kubitzke@uos.de}
\thanks{Research of Juhnke-Kubitzke was partially supported by the German Research Council DFG-GRK 1916.}

\author[S. Murai]{Satoshi Murai}
\address{
Satoshi Murai,
Department of Pure and Applied Mathematics,
Graduate School of Information Science and Technology,
Osaka University,
Suita, Osaka, 565-0871, Japan}
\email{s-murai@ist.osaka-u.ac.jp
}
\thanks{Research of Murai was partially supported by KAKENHI16K05102.
}

\author[E. Nevo]{Eran Nevo}
\address{Eran Nevo, Einstein Institute of Mathematics, The Hebrew University of Jerusalem, Jerusalem, Israel}
\email{nevo@math.huji.ac.il}
\thanks{Research of Nevo was partially supported by Israel Science Foundation grant ISF-1695/15.
}



\begin{abstract}
In this paper, we study Lefschetz properties of Artinian reductions of Stanley--Reisner rings of balanced simplicial $3$-polytopes.
A $(d-1)$-dimensional simplicial complex is said to be {\em balanced} if its graph is $d$-colorable.
If a simplicial complex is balanced, then its Stanley--Reisner ring has a special system of parameters induced by the coloring.
We prove that the Artinian reduction of the Stanley--Reisner ring of a balanced simplicial $3$-polytope with respect to this special system of parameters has the strong Lefschetz property if the characteristic of the base field is not two or three.
Moreover, we characterize $(2,1)$-balanced simplicial polytopes, i.e., polytopes with exactly one red vertex and two blue vertices in each facet, such that an analogous property holds. In fact, we show that this is the case if and only if the induced graph on the blue vertices satisfies a Laman-type combinatorial condition.
\end{abstract}

\maketitle

\section{Introduction}
Let $\FF$ be an infinite field.
An Artinian Gorenstein standard graded $\FF$-algebra $A=A_0\oplus A_1\oplus \cdots \oplus A_s$ with $A_0 \cong A_s \cong \FF$ is said to have the {\em strong Lefschetz property} (SLP, for short) if there is a linear form $w \in A_1$ such that the multiplication map $\times w^{s-2i} : A_i \to A_{s-i}$ is bijective for all $i < \frac s 2$.
This property is motivated by the Hard Lefschetz theorem, and has been of great interest in both algebra and combinatorics, with a multitude of applications
(see the book \cite{book:Lefschetz}).
Proving the SLP is difficult in general, and
it is interesting to find new classes of Artinian Gorenstein algebras having the SLP.
In this paper, we study the SLP for certain Artinian reductions of the Stanley--Reisner rings of simplicial $3$-polytopes, which satisfy nice vertex coloring conditions.

Given a simplicial complex $\Delta$ on the vertex set $V$,
the ideal $I_\Delta$ of $\FF[x_v:v\in V]$, defined by
$$I_\Delta=(x_{v_1} \cdots x_{v_k}: \{ v_1,\dots,v_k\} \subseteq V,\ \{ v_1,\dots,v_k\} \not \in \Delta),$$
is called the {\em Stanley--Reisner ideal} of $\Delta$, and
the quotient ring
$$\FF [\Delta] = \FF[x_v : v \in V] /I_\Delta$$
is called the {\em Stanley--Reisner ring} of $\Delta$ over the field $\FF$.
A $(d-1)$-dimensional simplicial complex $\Delta$ is said to be {\em balanced} (or {\em completely balanced} in some literature) if its graph is $d$-colorable,
equivalently, if there is a map $\kappa : V \to [d]=\{1,2,\dots,d\}$ such that, for all faces $\sigma \in \Delta$, one has $|\{ v \in \sigma: \kappa(v)=i\}| \leq 1$ for all $i \in [d]$.
It was proved by Stanley \cite{Stan:79} that if $\Delta$ is balanced, then the sequence of linear forms $\Theta=(\theta_1,\dots,\theta_d$) defined by $\theta_i= \sum_{\kappa (v)=i} x_v$ for $i=1,2,\dots,d$,
is a system of parameters for $\FF[\Delta]$.
We call such
$\Theta$
a {\em colored system of parameters} (colored s.o.p.\ for short) for $\FF[\Delta]$.
Note that, if $\Delta$ is strongly connected, then 
a map $\kappa$ satisfying the above condition is unique up to permutation of the elements of $[d]$ (see Section 2). So, as a set, the colored s.o.p.\ does not depend on the choice of the coloring $\kappa$.

A {\em simplicial $d$-sphere} is a simplicial complex which is homeomorphic to a $d$-sphere.
In general, the boundary complex of a simplicial $d$-polytope is a simplicial $(d-1)$-sphere,
and, by a classical theorem of Steinitz, every simplicial $2$-sphere is the boundary complex of some simplicial $3$-polytope.
If $\Delta$ is the boundary complex of a simplicial polytope and $\Theta$ is a linear system of parameters for $\FF[\Delta]$,
then the algebra $\FF[\Delta]/\Theta \FF[\Delta]$ is an Artinian Gorenstein algebra,
and moreover, by the Hard Lefschetz theorem for projective toric varieties,
for a certain choice of $\Theta$ (corresponding to convex embeddings, or generic ones) this algebra has the SLP in characteristic $0
$ (see \cite[III, Section 1]{Stan:Gr}).
However, when $\Delta$ is balanced,
the linear system of parameters $\Theta$ used in this setting is not the colored s.o.p.\ defined above, and it is hence natural to ask whether the SLP holds for this specific s.o.p.\ as well.

We say that a balanced simplicial sphere $\Delta$ has the {\em colored SLP} over a field $\FF$ if $\FF[\Delta]/\Theta \FF[\Delta]$ has the SLP for the colored s.o.p.\ $\Theta$ for $\FF[\Delta]$.
The first main result of this paper is the following.

\begin{theorem}
\label{thm:main1}
Let $\FF$ be an infinite field with $\mathrm{char}(\FF) \ne 2,3$.
Any balanced simplicial $2$-sphere has the colored SLP over $\FF$.
\end{theorem}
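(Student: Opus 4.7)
Since $\Delta$ is a balanced simplicial $2$-sphere, the Artinian reduction $A=\FF[\Delta]/\Theta \FF[\Delta]$ is Gorenstein of socle degree $3$, with $\dim A_0 = \dim A_3 = 1$ and, by Gorenstein duality, $\dim A_1 = \dim A_2$. The strong Lefschetz property therefore collapses to the existence of a single linear form $w \in A_1$ for which multiplication $\times w : A_1 \to A_2$ is bijective; once this holds for some (hence generic) $w$, the map $\times w^3 : A_0 \to A_3$ is automatically nonzero by composition. So my aim reduces to producing a Lefschetz element in the middle degree.

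The plan is induction on the number of vertices $n=|V|$. The base case is the boundary of the octahedron, which is the unique minimal balanced $2$-sphere (six vertices, two of each color), and where colored SLP can be checked by direct calculation; this is where the restriction $\mathrm{char}(\FF)\ne 2,3$ should first appear, through the Vandermonde-type determinant of the multiplication matrix in the natural monomial basis. For the inductive step one needs a local move that reduces any larger balanced $2$-sphere $\Delta$ to a smaller balanced $2$-sphere $\Delta'$, while preserving both $3$-colorability and the $2$-sphere topology. Natural candidates are the contraction of a color-distinct edge whose link is a $4$-cycle (a balanced analogue of an edge contraction in the sense used in generic rigidity), or the inverse of a balanced vertex split. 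Assuming such a move exists, one would build a candidate $w$ for $\Delta$ from a Lefschetz element for $\Delta'$, write the matrix of $\times w : A_1 \to A_2$ in a basis that isolates the monomials involving the newly introduced vertex and its neighbors, and reduce the non-vanishing of the full determinant to the non-vanishing of the $\Delta'$-block guaranteed by induction.

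The main obstacle is twofold. Combinatorially, one has to prove that every balanced $2$-sphere admits a balanced reducing move and, iteratively, contracts to the octahedron; this may fail for small configurations, forcing one to enlarge the list of base cases. Algebraically, tracking the determinant of $\times w : A_1 \to A_2$ under the move requires an explicit formula for the multiplication matrix restricted to the local region around the contracted edge; it is precisely through these local formulae, which involve the three colors and triangular facets in an essential way, that the excluded characteristics $2$ and $3$ enter. An alternative route would be to bypass the inductive move entirely and instead seek a direct combinatorial criterion for colored SLP of a balanced $2$-sphere, paralleling the Laman-type criterion used in the paper's $(2,1)$-balanced theorem, and then verify it using a Gluck-type generic rigidity theorem adapted to the colored setting; whichever approach is chosen, handling the local-to-global step is the heart of the argument.
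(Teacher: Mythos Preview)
Your outline has the right shape---induction on $|V|$, base case the octahedron, a reducing move in the inductive step---and this is indeed what the paper does. But three concrete pieces are either wrong or missing.

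First, the reducing move you propose is the wrong one. Contracting a ``color-distinct edge'' cannot preserve $3$-colorability: if $\{u,v\}\in\Delta$ with $\kappa(u)\ne\kappa(v)$, the merged vertex is adjacent to vertices of all three colors, so the result is not balanced. The correct move (the paper's \emph{balanced contraction}) merges two vertices $p,q$ of the \emph{same} color, so in particular $\{p,q\}\notin\Delta$, subject to the condition $\st_\Delta(p)\cap\st_\Delta(q)=\langle\{s,w\},\{w,t\}\rangle$ for some path $s,w,t$.

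Second, the induction does not run on a single move. Batagelj's theorem gives a dichotomy: a balanced $2$-sphere with more than six vertices has either a missing triangle or a contractible pair. In the first case one writes $\Delta=\Gamma_1\#_\sigma\Gamma_2$ and uses the Babson--Nevo lemma that colored SLP is preserved under connected sum; in the second case one applies the balanced contraction. Your sketch omits the connected-sum branch entirely, and it is genuinely needed, since there exist balanced $2$-spheres with no contractible pair.

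Third, the heart of the argument is not a block-determinant computation but a degeneration. One introduces an auxiliary complex $\Gamma\supseteq\bcont(\Delta)$, checks via a short exact sequence that $\Gamma$ inherits the colored SLP from $\bcont(\Delta)$, and then builds a one-parameter family of ideals $J(t)$ with $J(0)_2=(I_\Gamma)_2$ and $J(t)_2=\varphi_t(I_\Delta+(x_v^2))_2$ for $t\ne0$, where $\varphi_t$ is the coordinate change $x_q\mapsto x_p+tx_q$. Semicontinuity of rank in $t$ then transports the vanishing $(I_\Gamma+(\Theta,w))_2=S_2$ to the analogous statement for $I_\Delta$, after undoing $\varphi_t$ and rescaling (this last rescaling step is where one uses $t\ne0,1$, and the squares argument uses that $\Theta$ is the colored s.o.p.). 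Your plan of ``isolating the monomials involving the new vertex'' does not obviously survive the fact that the balanced contraction deletes \emph{two} vertices ($p$ and $w$) and rewires the stars of both; the degeneration sidesteps this.

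Finally, your alternative route via a Laman-type criterion does not transfer: the paper's Theorem~\ref{thm:main2} shows that in the $(2,1)$-balanced case such a criterion characterizes when SLP \emph{fails}, whereas in the fully balanced case SLP always holds, so there is no nontrivial combinatorial obstruction to detect.
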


Note that in characteristic $2$ and $3$ \emph{any} $\omega\in (\FF[\Delta]/\Theta \FF[\Delta])_1$ satisfies $\omega^3=0$ for $\Theta$ the colored s.o.p., and thus $\Delta$ fails to have the colored SLP over $\FF$.
We consider a similar problem also for a more general class of spheres, namely, $(2,1)$-balanced simplicial $2$-spheres.
For $\aaa=(a_1,\dots,a_n) \in \NN^n$,
a simplicial complex $\Delta$ on the vertex set $V$ is said to be {\em $\aaa$-balanced}
if $\Delta$ has dimension $a_1+ \cdots +a_n-1$
and there is a map $\kappa: V \to [n]$ such that, for any face $\sigma \in \Delta$,
we have $|\{ v \in \sigma: \kappa(v)=i\}| \leq a_i$ for all $i \in [n]$.
We call such a map $\kappa$ an {\em $\aaa$-coloring} of $\Delta$.
By a result of Stanley \cite{Stan:79}, for an $\aaa$-balanced simplicial complex $\Delta$, there exists an s.o.p.\ $\theta_1,\dots,\theta_d$ such that exactly $a_j$ of the
$\theta_i$'s are a linear combination of the variables $x_v$ having the same color $j$ (that is, $\kappa(v)=j$).
We call such a system of parameters an {\em $\aaa$-colored system of parameters} ({\em $\aaa$-colored s.o.p.\ }for short) for $\FF[\Delta]$.

It is natural to ask if an analogue of Theorem \ref{thm:main1} holds for $\aaa$-balanced simplicial polytopes and spheres. Somewhat surprisingly, we find that the answer is negative even when $\aaa=(2,1)$. More precisely, we provide the following combinatorial characterization of the SLP for Artinian reductions of $\FF[\Delta]$ with respect to any $(2,1)$-colored s.o.p.\ if $\Delta$ is a $(2,1)$-balanced simplicial sphere.

\begin{theorem}
\label{thm:main2}
Let $\FF$ be an infinite field with $\mathrm{char}(\FF) \ne 2,3$.
Let $\Delta$ be a $(2,1)$-balanced simplicial $2$-sphere, $\kappa:V\rightarrow \{1,2\}$ a $(2,1)$-coloring of $\Delta$, and $U$ the set of the vertices $v$ of $\Delta$ with $\kappa (v)=1$.
The following conditions are equivalent.
\begin{itemize}
\item[(i)] There is a $(2,1)$-colored s.o.p.\ $\Theta$ for $\FF[\Delta]$ such that $\FF[\Delta]/(\Theta)$ has the SLP.
\item[(ii)]
For any subset $W \subseteq U$ with $|W| \geq 2$, the induced subcomplex $\Delta_W=\{ \sigma \in \Delta: \sigma \subseteq W\}$ has at most $2|W|-3$ edges.
\end{itemize}
\end{theorem}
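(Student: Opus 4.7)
The plan is to translate condition (i) into a rank condition on an explicit matrix attached to the red subgraph $\Delta_U$ and then identify this rank condition with the classical Laman condition for generic plane rigidity. Since $A = \FF[\Delta]/(\Theta)$ is Artinian Gorenstein of socle degree $3$ with $\dim_\FF A_1 = \dim_\FF A_2 = n-3$ (where $n = |V|$), and $\mathrm{char}(\FF) \neq 2, 3$ ensures $\times \omega^3 : A_0 \to A_3$ is an isomorphism (as in Theorem~\ref{thm:main1}), the SLP is equivalent to $\times \omega : A_1 \to A_2$ being bijective for some linear $\omega \in A_1$. Existence of such $\omega$ together with some colored $\Theta$ is a Zariski-open condition, so I may assume throughout that $\theta_1, \theta_2$ are generic $\FF$-linear combinations of $\{x_u : u \in U\}$, that $\theta_3$ is generic in the blue variables, and that $\omega = \omega_r + \omega_b$ is generic.

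I work with the color-decomposition $A_2 = A_2^{\mathrm{rr}} \oplus A_2^{\mathrm{rb}}$, which is preserved by the colored parameters ($A_2^{\mathrm{bb}}$ vanishes, since $\Delta$ has no blue--blue edges and $\theta_3 x_v = \alpha_v x_v^2$ kills blue squares). A Hilbert-series calculation, using that the only Koszul dependency among the relations $\theta_1 x_u, \theta_2 x_u$ ($u \in U$) is $\theta_1 \theta_2 = \theta_2 \theta_1$, gives $\dim A_2^{\mathrm{rr}} = b - 1$ and $\dim A_2^{\mathrm{rb}} = r - 2$, where $r = |U|$ and $b = n - r$. In the corresponding bi-graded basis the matrix of $\times \omega$ has the block form
\[
M = \begin{pmatrix} \mathcal{A} & 0 \\ \mathcal{B} & \mathcal{C} \end{pmatrix},
\]
with the upper-right zero block coming from $\omega_b x_v \in A_2^{\mathrm{bb}} = 0$ for $v$ blue. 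The blocks $\mathcal{A}, \mathcal{B}, \mathcal{C}$ record multiplication by $\omega_r$ or $\omega_b$ on the respective color classes.

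The crux is that, using the generic parameters $\theta_1, \theta_2$ to place each $u \in U$ at the point $p(u) = (\alpha_{1,u}, \alpha_{2,u}) \in \FF^2$, the block $\mathcal{A} : A_1^{\mathrm{red}} \to A_2^{\mathrm{rr}}$ becomes --- up to invertible changes of basis depending on $\omega_r$ --- a quotient of the classical plane rigidity matrix of $\Delta_U$, in which each red edge $\{u, u'\} \in \Delta_U$ contributes a row proportional to $(p(u) - p(u'), p(u') - p(u))$ in the $u$ and $u'$ coordinate columns. By Laman's theorem, this rigidity matrix has all rows independent --- equivalently, has maximal rank on every subset of edges --- precisely when the sparsity condition (ii) holds. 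Combined with the fact that, for generic $\omega$ and $\Theta$, the blocks $\mathcal{B}, \mathcal{C}$ introduce no extra obstruction and so invertibility of $M$ reduces to full-rankness of $\mathcal{A}$, this yields (i)$\Leftrightarrow$(ii).

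The main technical obstacle is the identification of $\mathcal{A}$ with the plane rigidity matrix of $\Delta_U$. It requires a careful choice of bases of $A_1^{\mathrm{red}}$ and $A_2^{\mathrm{rr}}$, extracting the rigidity structure from the Koszul dependency, and checking that the auxiliary blocks $\mathcal{B}$ and $\mathcal{C}$ are generically of the expected rank so that they do not spoil invertibility of $M$. The characteristic hypothesis enters via the genericity of the parameters, paralleling its role in Theorem~\ref{thm:main1}.
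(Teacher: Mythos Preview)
Your approach is genuinely different from the paper's: the paper proves (ii)$\Rightarrow$(i) by an induction on $|V(\Delta)|$, using connected sums (Lemma~\ref{4.4}) and a carefully chosen admissible edge contraction (Lemma~\ref{4.5}) to reduce to a smaller sphere, while you aim for a direct linear-algebra reduction to Laman's theorem. For (i)$\Rightarrow$(ii) the two routes essentially coincide: your observation that only the block $\mathcal{A}$ hits $A_2^{\mathrm{rr}}$, so non-surjectivity of $\mathcal{A}$ forces non-surjectivity of $M$, is exactly the content of the paper's Lemma~\ref{4.1} (the surjection $\FF[\Delta]\to\FF[\Delta_W]$ plays the role of your projection onto the rr block).

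The gap is in (ii)$\Rightarrow$(i). You assert that ``the blocks $\mathcal{B},\mathcal{C}$ introduce no extra obstruction and so invertibility of $M$ reduces to full-rankness of $\mathcal{A}$,'' but this is false as stated and is where the real difficulty sits. Note first that $\mathcal{A}:A_1^{\mathrm{red}}\to A_2^{\mathrm{rr}}$ is $(r-2)\times(b-1)$ and $\mathcal{C}:A_1^{\mathrm{blue}}\to A_2^{\mathrm{rb}}$ is $(b-1)\times(r-2)$; under (ii) one has $b-1\le r-2$, so neither block is square. Using the Gorenstein pairing (which pairs $A_1^{\mathrm{red}}$ with $A_2^{\mathrm{rb}}$ and $A_1^{\mathrm{blue}}$ with $A_2^{\mathrm{rr}}$) one finds $\mathcal{C}=\mathcal{A}^{*}$ and $\mathcal{B}=\mathcal{B}^{*}$. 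A short computation then shows that $M$ is invertible if and only if $\mathcal{A}$ is surjective \emph{and} the induced symmetric form $\mathcal{B}|_{\ker\mathcal{A}}:\ker\mathcal{A}\to(\ker\mathcal{A})^{*}\cong A_2^{\mathrm{rb}}/\mathrm{Im}(\mathcal{A}^{*})$ is non-degenerate. The first condition is exactly Laman independence of $\Delta_U$ (via Lee's identification of the stress space with $(\FF[\Delta_U]/(\theta_1,\theta_2,\omega_r))_2$), so your intuition there is right. But the second condition is a genuine constraint: $\mathcal{B}$ is multiplication by $\omega_b$ and depends on the incidence structure between the two color classes, and its restriction to the space of infinitesimal flexes $\ker\mathcal{A}$ is \emph{not} made non-degenerate by genericity of $\omega_b$ alone (already the toy example $\mathcal{A}=(1\ 0)$, $\mathcal{B}=\begin{pmatrix}a&b\\ b&c\end{pmatrix}$ shows $\det M=-c$ can vanish with $\mathcal{A}$ surjective). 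Proving this non-degeneracy for arbitrary $(2,1)$-balanced $2$-spheres satisfying (ii) is not addressed in your outline, and it is not clear it is any easier than the theorem itself; the paper's inductive argument in Theorem~\ref{4-6} is precisely what circumvents this obstacle.
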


The above criterion (ii) is motivated by, and essentially the same as, Laman's criterion for minimal generic rigidity of graphs in the plane \cite{Laman}.

Theorem \ref{thm:main2} allows us to construct $(2,1)$-balanced simplicial $2$-spheres $\Delta$ such that the Artinian reduction of $\FF[\Delta]$ with respect to any $(2,1)$-colored s.o.p.\ fails to have the SLP
(see Example \ref{4.2}).

Even though an analogue of Theorem \ref{thm:main1} for $\aaa$-balanced simplicial polytopes
does not hold, considering Theorem \ref{thm:main1}
we propose the following conjecture in higher dimensions:

\begin{conjecture}
Any balanced simplicial sphere (or at least any balanced simplicial polytope) has the colored SLP over a field of characteristic $0$.
\end{conjecture}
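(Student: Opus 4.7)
My plan is to proceed by induction on the dimension $d$ of the sphere; the base case $d=3$ is exactly Theorem~\ref{thm:main1}. For the inductive step, let $\Delta$ be a balanced $(d-1)$-sphere with coloring $\kappa\colon V\to [d]$ and colored s.o.p.\ $\Theta=(\theta_1,\dots,\theta_d)$, and set $A=\FF[\Delta]/\Theta$ for the Artinian reduction. I must produce a linear form $\omega\in A_1$ such that $\omega^{d-2i}\colon A_i\to A_{d-i}$ is bijective for every $i<d/2$.

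The strategy I would adopt is to reduce the SLP for $A$ to the SLP of the colored Artinian reductions of the vertex-links of $\Delta$. For any $v\in V$ the link $\lk_\Delta(v)$ is a balanced $(d-2)$-sphere whose coloring uses the palette $[d]\setminus\{\kappa(v)\}$, so by the inductive hypothesis its colored Artinian reduction has the SLP. To transfer this information to $A$, one can exploit the standard short exact sequence of graded $\FF[\Delta]$-modules
\[
0\to \FF[\st_\Delta(v)](-1)\xrightarrow{\ \cdot x_v\ }\FF[\Delta]\longrightarrow \FF[\Delta\setminus v]\to 0,
\]
in which $\FF[\st_\Delta(v)]\cong \FF[\lk_\Delta(v)]\otimes_{\FF}\FF[x_v]$. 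Reducing by $\Theta$ and chasing the resulting long exact sequence ought to control the Hilbert data and multiplication maps of $A$ in terms of the corresponding data on the links and the deletions (the latter being lower-dimensional balanced balls). A generic linear combination $\omega=\sum_v c_v x_v$, chosen so that its restriction to each link is itself Lefschetz there, is the natural candidate.

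A complementary route, restricted to boundary complexes of simplicial polytopes, would be to invoke the classical hard Lefschetz theorem, which already grants SLP for a \emph{generic} Artinian reduction of $\FF[\Delta]$ in characteristic $0$. The problem then becomes one of semicontinuity: the locus of s.o.p.'s for which $\omega^{d-2i}$ has maximal rank is Zariski open, so it suffices to check that the specific, highly symmetric point $\Theta$ lies inside it. Concretely, this reduces the conjecture to the non-vanishing, at $\Theta$, of a collection of determinants of explicit combinatorial matrices.

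The principal obstacle in either route is that the colored s.o.p.\ is very far from generic: it lives in a highly structured, low-dimensional affine subspace of the space of regular sequences, and the relations $\sum_{\kappa(u)=i}x_u=0$ it forces on $A$ create syzygies which already destroy SLP in characteristics $2$ and $3$ for $d=3$. Inside the inductive approach specifically, the most delicate step will be comparing the restriction of $\Theta$ to $\lk_\Delta(v)$ with the colored s.o.p.\ of $\lk_\Delta(v)$ itself: the two differ by contributions of $v$ and its $\Delta$-neighbours, and controlling this discrepancy---so that it neither breaks regularity nor destroys Lefschetz bijectivity---is where genuinely new input seems required, perhaps through a deformation argument respecting the color-class decomposition, or via stress-space techniques adapted to balanced triangulations.
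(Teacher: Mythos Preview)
The statement you are attempting to prove is a \emph{conjecture} in the paper: the authors explicitly propose it as an open problem for dimensions $d\geq 4$, and no proof is given there. What you have written is not a proof either, but a discussion of two possible strategies together with an honest admission that ``genuinely new input seems required.'' That admission is accurate; neither strategy, as presented, closes the gap.

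Both routes have concrete holes. In the inductive route, the restriction of the colored s.o.p.\ to $\st_\Delta(v)$ is actually clean---in a balanced complex $\lk_\Delta(v)$ contains no vertex of color $\kappa(v)$, so $\theta_{\kappa(v)}$ restricts to $x_v$ on the star and the remaining $\theta_i$ restrict precisely to the colored s.o.p.\ of the link---so the ``discrepancy'' you flag as the principal obstacle is not one. The real obstacle is elsewhere: your short exact sequence relates $A$ to the colored Artinian reduction of a single link and to $\FF[\Delta\setminus v]/\Theta$, but $\Delta\setminus v$ is a ball, not a sphere, and you have no inductive hypothesis for it. Link-based exact-sequence arguments of this shape are known to yield at best weak-Lefschetz-type information; there is no mechanism here for assembling SLP of all links into SLP of $\Delta$. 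In the semicontinuity route the logic is inverted: openness of the maximal-rank locus in the space of l.s.o.p.'s says nothing about whether the specific point $\Theta$ lies in it, and your sentence ``it suffices to check that the specific, highly symmetric point $\Theta$ lies inside it'' is not a reduction of the conjecture but a restatement of it.
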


The paper is structured as follows. Section \ref{sect:prel} provides some background on simplicial complexes and constructions on simplicial spheres.
Section \ref{sect:Lefschetz} contains the proof of our first main result Theorem \ref{thm:main1}. Finally, Section \ref{sect:aBalanced} studies $(2,1)$-balanced simplicial $2$-spheres. Our second main result (Theorem \ref{thm:main2}) characterizes when those have the SLP with respect to a $(2,1)$-colored s.o.p.


\section{Preliminaries}\label{sect:prel}
In this section we provide some background and introduce notation that will be used throughout this article.

\subsection{Simplicial complexes}
A simplicial complex $\Delta$ on a finite set $V$ is a collection of subsets of $V$ that is closed under inclusion.
An element of $\Delta$ is called a {\em face} of $\Delta$ and maximal faces (under inclusion) are called {\em facets} of $\Delta$.
The {\em dimension} of a face is its cardinality minus one, and the dimension of a simplicial complex is the maximal dimension of its faces.
Faces of dimension $0$ are called {\em vertices}
and faces of dimension $1$ are called {\em edges}.
We denote by $V(\Delta)=\{v: \{v\} \in \Delta\}$ the vertex set of $\Delta$, and identify a singleton $\{v\} \in \Delta$ with $v \in V(\Delta)$.
A simplicial complex is said to be {\em pure} if all its facets have the same dimension.
A pure simplicial complex $\Delta$ is said to be {\em strongly connected} if, for any pair $\sigma,\tau$ of facets of $\Delta$, there is a sequence $\rho_1,\dots,\rho_k$ of facets of $\Delta$ such that $|\sigma \setminus \rho_1| = |\rho_1 \setminus \rho_2|= \cdots = |\rho_k\setminus \tau|=1$.

For a simplicial complex $\Delta$, a map $\kappa :V(\Delta) \to [d]$ is said to be a {\em proper $d$-coloring} of $\Delta$ if $\kappa(u)\ne \kappa(v)$ for all edges $\{u,v\} \in \Delta$.
Note that, a $(d-1)$-dimensional simplicial complex $\Delta$ is balanced if and only if it has a proper $d$-coloring.
If, in addition, $\Delta$ is strongly connected, then the choice of a proper $d$-coloring is unique up to permutations of the elements of $[d]$
(since the values of $\kappa$ for vertices of one facet determine the values of $\kappa$ for all other vertices).
The smallest example of a balanced simplicial $(d-1)$-sphere is the boundary complex of
the \emph{$d$-crosspolytope},
which is the convex hull of the unit vectors and their antipodes in $\mathbb{R}^d$.

For a simplicial complex $\Delta$ and a vertex $v \in V(\Delta)$, the simplicial complex
$$\st_\Delta(v)=\{ \tau \in \Delta: \tau \cup \{v\} \in \Delta\}$$
is called the {\em star} of $v$ in $\Delta$.
A {\em simplicial $2$-ball} is a simplicial complex that is homeomorphic to a $2$-dimensional ball.
If $\Delta$ is a simplicial $2$-sphere, then $\st_\Delta(v)$ is a simplicial $2$-ball for any vertex $v \in V(\Delta)$.
For a simplicial $2$-ball $B$, we write $\partial B$ for the boundary complex of $B$, and write $\mathrm{int} (B)=B \setminus \partial B$ for the set of all interior faces of $B$.

Given a $(d-1)$-dimensional simplicial complex $\Delta$, a sequence of linear forms $\theta_1,\dots,\theta_d \in \FF[\Delta]$ is said to be a {\em linear system of parameters} ({\em l.s.o.p.\ }for short) for $\FF[\Delta]$ if $\dim_\FF (\FF[\Delta]/(\theta_1,\dots,\theta_d) \FF[\Delta]) < \infty$,
and the Artinian algebra $\FF[\Delta]/(\theta_1,\dots,\theta_d) \FF[\Delta]$ is called the {\em Artinian reduction} of $\FF[\Delta]$ w.r.t.\ $\theta_1,\dots,\theta_d$. As mentioned in the introduction,
if $\Delta$ is balanced and $\kappa$ a proper $d$-coloring of $\Delta$,
then the sequence of linear forms $\theta_1,\dots,\theta_d$ defined by $\theta_i=\sum_{v \in V(\Delta),\ \kappa(v)=i} x_v$ forms an l.s.o.p.\ for $\FF[\Delta]$, the so-called {\em colored s.o.p.}

\subsection{Operations on simplicial spheres}
Finally, we recall two combinatorial operations on simplicial $2$-spheres.
For finite subsets $\sigma_1,\dots,\sigma_k$, we write
$$\langle \sigma_1,\dots,\sigma_k \rangle
=\{ \tau: \tau \subseteq \sigma_i \mbox{ for some }i\}$$
for the simplicial complex generated by $\sigma_1,\dots,\sigma_k$.
\begin{definition}
\label{def:sum}
Let $\Delta$ and $\Gamma$ be $2$-dimensional simplicial complexes. If $\Delta \cap \Sigma$ is generated by a single $2$-dimensional face $\sigma$,
then the simplicial complex
$$(\Delta\setminus \{\sigma\}) \cup (\Gamma \setminus \{\sigma\})$$
is called the {\em connected sum} of $\Delta$ and $\Gamma$, and denoted by $\Delta \#_\sigma \Gamma$.
\end{definition}

A {\em missing triangle} of a simplicial complex $\Delta$ is a set $\{a,b,c\}$
such that $\{a,b\}$, $\{a,c\}$, $\{b,c\} \in \Delta$ and $\{a,b,c\} \not \in \Delta$.
The following property is well-known.

\begin{lemma}
\label{missingface}
Let $\Delta$ be a simplicial $2$-sphere.
If $\sigma$ is a missing triangle of $\Delta$, then there are unique simplicial $2$-spheres $\Gamma$ and $\Sigma$ such that $\Delta=\Gamma \#_\sigma \Sigma$.
Moreover, if $\Delta$ is balanced, then so are $\Gamma$ and $\Sigma$.
\end{lemma}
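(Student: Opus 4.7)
The plan is to combine a PL version of the Jordan curve theorem with a direct check on colorings. Since $\sigma=\{a,b,c\}$ is missing, the three edges $\{a,b\},\{a,c\},\{b,c\}\in\Delta$ trace out a topologically embedded circle $C$ in the 2-sphere $|\Delta|$. I would invoke the Jordan--Sch\"onflies theorem (applied to $|\Delta|\cong S^2$) to conclude that $|\Delta|\setminus C$ has exactly two connected components, whose closures $D_1,D_2$ are closed 2-disks meeting along $C$.

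Next I would lift this topological splitting to a combinatorial one. Define $B_i$ ($i=1,2$) to be the subcomplex of $\Delta$ consisting of those faces $\tau\in\Delta$ with geometric realization contained in $D_i$. Since $\Delta$ is a simplicial 2-sphere and $C$ is a union of edges of $\Delta$, each $B_i$ is a full subcomplex, is a pure 2-dimensional simplicial complex and, via the homeomorphism $|B_i|\cong D_i$, is a simplicial 2-ball whose boundary complex equals $\langle\{a,b\},\{a,c\},\{b,c\}\rangle$. Now set $\Gamma=B_1\cup\{\sigma\}$ and $\Sigma=B_2\cup\{\sigma\}$; since each $B_i$ is a 2-ball whose boundary is the cycle on $\{a,b,c\}$, filling in the 2-face $\sigma$ yields a simplicial 2-sphere. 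By construction $\Gamma\cap\Sigma=\langle\sigma\rangle$, and
\[
(\Gamma\setminus\{\sigma\})\cup(\Sigma\setminus\{\sigma\})=B_1\cup B_2=\Delta,
\]
so $\Delta=\Gamma\#_\sigma\Sigma$. Uniqueness (up to swapping $\Gamma,\Sigma$) follows because the two Jordan regions $D_1,D_2$ are uniquely determined by $C$, and hence so are $B_1,B_2$.

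Finally, for the balanced statement, assume $\Delta$ carries a proper 3-coloring $\kappa\colon V(\Delta)\to[3]$. Since $\{a,b\},\{a,c\},\{b,c\}$ are all edges of $\Delta$, the values $\kappa(a),\kappa(b),\kappa(c)$ are pairwise distinct and therefore exhaust $[3]$. Restricting $\kappa$ to $V(\Gamma)$ and $V(\Sigma)$ produces proper colorings of $B_1$ and $B_2$; the only new maximal face introduced in passing from $B_i$ to $\Gamma$ (resp.\ $\Sigma$) is $\sigma=\{a,b,c\}$, whose vertices already receive three distinct colors, so the restricted coloring remains proper. Hence $\Gamma$ and $\Sigma$ are balanced.

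The only nontrivial step is the combinatorial/topological one, namely upgrading Jordan--Sch\"onflies into the statement that each $B_i$ is a simplicial 2-ball whose boundary is exactly the edge-cycle on $\{a,b,c\}$. This is a standard fact about PL 2-spheres (a simple closed curve in a triangulated $S^2$ that is a subcomplex separates it into two simplicial 2-balls), so I would simply cite it rather than reprove it. Everything else is formal bookkeeping.
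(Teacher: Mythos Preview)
Your proposal is correct and follows essentially the same approach as the paper: the paper also derives the first part from Jordan's curve theorem (citing \cite[Lemma 1.3]{BD} for the PL statement) and handles the balanced part by observing that the $1$-skeleta of $\Gamma$ and $\Sigma$ are subgraphs of that of $\Delta$, which is exactly your restriction-of-coloring argument phrased more succinctly.
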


The first part of Lemma \ref{missingface} follows easily from Jordan's curve theorem; see \cite[Lemma 1.3]{BD} for a more general statement for PL-manifolds.
The second part follows from the fact that the $1$-skeleta of $\Gamma$ and $\Sigma$ are subgraphs of the one of $\Delta$.

\begin{definition}
\label{def:contraction}
For a simplicial complex $\Delta$ and two of its vertices $p,q$,
we define
$$\cont(\Delta)= \{ \sigma \in \Delta: p \not \in \sigma\} \cup \{ (\sigma \setminus \{p\}) \cup \{q\}: p \in \sigma \in \Delta\}.$$
If $\{p,q\}$ is an edge of $\Delta$,
then the operation $\Delta \to \cont(\Delta)$ is called the {\em contraction} of the edge $\{p,q\}$.
\end{definition}

For a simplicial $2$-sphere $\Delta$, that is not the boundary of a $3$-simplex,
a contraction $\Delta \to \cont(\Delta)$ is {\em admissible} if there are no missing triangles of $\Delta$ that contain the edge $\{p,q\}$.
Note that this condition is equivalent to saying that $\st_\Delta(p) \cap \st_\Delta(q)=\langle \{p,q,s\},\{p,q,t\}\rangle$ for some distinct vertices $s,t$.
The following fact is well-known, see e.g., \cite[Lemma 1]{Bar-RP2} for a short proof, or, more generally, \cite[Theorem 1.4]{Nevo-VK} for edge contractions in PL-manifolds.

\begin{lemma}
\label{admissiblecontraction}
Let $\Delta$ be a simplicial $2$-sphere. If $\Delta \to \cont (\Delta)$ is an admissible contraction, then $\cont(\Delta)$ is a simplicial $2$-sphere.
\end{lemma}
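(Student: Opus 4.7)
The plan is to verify that $\cont(\Delta)$ is a connected, closed, triangulated $2$-manifold with Euler characteristic $2$; since $S^2$ is the unique closed surface with $\chi=2$, it will follow that $\cont(\Delta)\cong S^2$. The contraction is local: only faces in $\st_\Delta(p)\cup\st_\Delta(q)$ are affected. The vertex $p$ and the edge $\{p,q\}$ disappear, the two triangles $\{p,q,s\}$ and $\{p,q,t\}$ collapse to the already-present edges $\{q,s\}$ and $\{q,t\}$, and every other face $\sigma$ containing $p$ is relabeled to $(\sigma\setminus\{p\})\cup\{q\}$. Admissibility, reformulated as the link condition $\lk_\Delta(p)\cap\lk_\Delta(q)=\langle\{s\},\{t\}\rangle$, prevents unwanted collisions: for every $v\in\lk_\Delta(p)\setminus\{q,s,t\}$ the edge $\{q,v\}$ is genuinely new, and no two distinct triangles of $\Delta$ map to the same face of $\cont(\Delta)$ beyond the two pairs already accounted for.

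The heart of the argument is to check that the link of every vertex of $\cont(\Delta)$ is a single cycle. For a vertex outside $\{p,q,s,t\}$ the link is either unchanged or has $p$ uniformly relabeled to $q$, so it remains a cycle. At $s$ (symmetrically at $t$) the cycle $\lk_\Delta(s)$ has $p$ adjacent both to $q$ and to exactly one further vertex $r$; the edges $\{p,q\}$ and $\{p,r\}$ disappear, and the triangle $\{p,s,r\}$ becomes $\{q,s,r\}$, contributing the single edge $\{q,r\}$ that closes up a shorter cycle. The crucial case is $q$: I would describe $\lk_{\cont(\Delta)}(q)$ as the union of two $s$-to-$t$ paths, namely $\lk_\Delta(p)$ with the vertex $q$ and the two edges $\{q,s\},\{q,t\}$ deleted, together with $\lk_\Delta(q)$ with the vertex $p$ and the two edges $\{p,s\},\{p,t\}$ deleted. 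The link condition forces these two paths to share only the endpoints $s$ and $t$, so their union is again a cycle.

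This link-of-$q$ analysis is the expected main obstacle, since one must rule out hidden overlaps of the two paths. A further shared vertex $v$ would give $\{p,v\},\{q,v\}\in\Delta$ with $v\notin\{s,t\}$, making $\{p,q,v\}$ a missing triangle through $\{p,q\}$ and contradicting admissibility. A shared edge could only be $\{s,t\}$, which would require both $\{p,s,t\}$ and $\{q,s,t\}$ to be faces, placing $\{s,t\}$ into $\lk_\Delta(p)\cap\lk_\Delta(q)$ and again contradicting admissibility. Once the manifold property is established, a routine $f$-vector count shows that contraction removes exactly one vertex, three edges, and two triangles, so $\chi(\cont(\Delta))=\chi(\Delta)=2$. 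Connectedness of $\cont(\Delta)$ is immediate, so we obtain a connected closed triangulated $2$-manifold with Euler characteristic $2$, completing the proof.
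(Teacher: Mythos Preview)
Your argument is correct: verifying that every vertex link in $\cont(\Delta)$ is a circle, together with the Euler-characteristic count and the classification of surfaces, is the standard way to prove this fact, and you have handled the only delicate case (the link of $q$) with exactly the right use of the admissibility hypothesis. One tiny point you skated over is that in the link of $s$ the new edge $\{q,r\}$ is not already present; but if it were, then $\{q,s,r\}\in\Delta$ would force $r\in\lk_\Delta(p)\cap\lk_\Delta(q)=\{s,t\}$, and $r=t$ then puts the edge $\{s,t\}$ into $\st_\Delta(p)\cap\st_\Delta(q)$, contradicting admissibility---so this is easily patched.

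As for comparison with the paper: the paper does not give a proof of this lemma at all. It records the statement as ``well-known'' and refers the reader to \cite[Lemma~1]{Bar-RP2} for a short proof and to \cite[Theorem~1.4]{Nevo-VK} for the general PL-manifold version. Your direct link-and-Euler-characteristic verification is essentially the classical argument that those references contain (Barnette's proof for surfaces is exactly of this type), so there is no substantive divergence---you have simply supplied what the paper chose to cite.
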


\subsection{Contractions for balanced simplicial $2$-spheres}



It is a classical result in graph theory, sometimes called the Three Color Theorem, that a simplicial $2$-sphere is balanced if and only if each of its vertices has an even degree. See \cite[p.44--46]{Golovina-Yaglom} for maybe the earliest published complete proof.
For such simplicial spheres, the following contraction operation has been considered.

\begin{definition}
\label{def:balancedcontraction}
Let $\Delta$ be a balanced simplicial $2$-sphere.
We say that a pair $(p,q)$ of distinct vertices of $\Delta$ is {\em a contractible pair} in $\Delta$ if
\begin{itemize}
\item[(i)] $p$ and $q$ have the same color, that is, $\kappa(p) = \kappa(q)$ for some proper $3$-coloring $\kappa$ of $\Delta$, and
\item[(ii)] there are vertices $s,t,w$ such that
$$\st_\Delta(p) \cap \st_\Delta(q)=\langle \{s,w\},\{w,t\} \rangle.$$
\end{itemize}
For a contractible pair $(p,q)$, we define
$$ \bcont(\Delta) = \big(\Delta \setminus \mathrm{int} \big(\st_\Delta(p) \cup \st_\Delta(q)\big)\big) \cup \big\{ \sigma \cup \{q\}: \sigma \in \partial \big(\st_\Delta(p) \cup \st_\Delta(q)\big)\big\}.$$
The operation $\Delta \to \bcont(\Delta)$ is called the {\em balanced contraction} (or $4$-contraction in some literatures) of the pair $(p,q)$, see Figure 1
for an illustration.
\end{definition}
\begin{center}
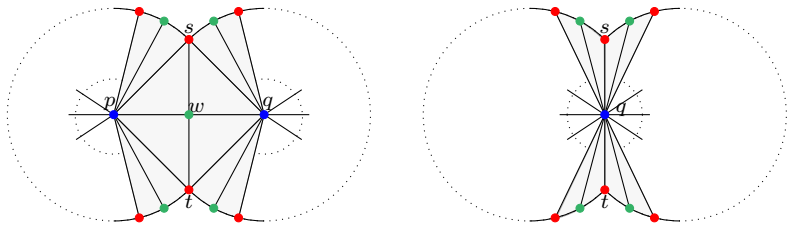
\begin{figure}[h]\label{fig:balancedContr}
\begin{tikzpicture}[scale=1.0, every node/.style={font=\footnotesize}]
\tikzstyle{point}=[ fill,circle, inner sep=0pt,minimum width=3pt,minimum height=3pt]
\filldraw[fill=gray!20!, opacity=0.3] (3,4) to (4,3) to (5,4) to (4,5) to (3,4) to (5,4);
\filldraw[fill=gray!20!, opacity=0.3]
(3,4) -- (4,5) arc (45:76:1.4142) -- cycle;
\filldraw[fill=gray!20!,opacity=0.3]
(3,4) -- (4,3) arc (-45:-76:1.4142) -- cycle;
\filldraw[fill=gray!20!, opacity=0.3]
(5,4) -- (4,5) arc (135:104:1.4142) -- cycle;
\filldraw[fill=gray!20!, opacity=0.3]
(5,4) -- (4,3) arc (225:256:1.4142) -- cycle;

\draw (3,4) to (4,3) to (5,4) to (4,5) to (3,4) to (5,4);
\draw (4,3) to (4,5);
\draw (3,4) to (3.34,2.627);
\draw (3,4) to (3.67,2.755);
\draw (3,4) to (3.34,5.373);
\draw (3,4) to (3.67, 5.245);
\draw (5,4) to (4.66,2.627);
\draw (5,4) to (4.33,2.755);
\draw (5,4) to (4.66,5.373);
\draw (5,4) to (4.33,5.245);


\draw (3,4) to (2.4,4);
\draw (3,4) to (2.5,4.3317);
\draw (3,4) to (2.5,3.6683);

\draw (5,4) to (5.6,4);
\draw (5,4) to (5.5,4.3317);
\draw (5,4) to (5.5,3.6683);
\draw[dotted] (4,5) arc (45:315:1.4142);
\draw (4,5) arc (45:90:1.4142);
\draw (3,2.5858) arc (270:315:1.4142);
\draw[dotted] (4,3) arc (225:495:1.4142);
\draw (4,5) arc (135:90:1.4142);
\draw (4,3) arc (225:270:1.4142);
\draw[dotted] (3.12,4.458) arc (75:285:0.5);
\draw[dotted] (4.88,4.458) arc (105:-105:0.5);
\draw (2.95,4.15) node {{\tiny$p$}};
\draw (5.05,4.15) node {{\tiny$q$}};
\draw (4.1,4.1) node {{\tiny$w$}};
\draw (4,5.15) node {{\tiny$s$}};
\draw (4,2.85) node {{\tiny$t$}};
\tikzstyle{point}=[ fill=blue, draw=blue,circle, inner sep=0pt,minimum width=3pt,minimum height=3pt]
\node [point, label={[label distance=0cm]110:}] at (3,4){};
\node [point, label={[label distance=0cm]90:}] at (5,4){};

\tikzstyle{point}=[ fill=red, draw=red,circle, inner sep=0pt,minimum width=3pt,minimum height=3pt]
\node [point, label={[label distance=0cm]90:}] at (4,3){};
\node [point, label={[label distance=0cm]90:}] at (4,5){};
\node [point, label={[label distance=0cm]90:}] at (3.34,2.627){};
\node [point, label={[label distance=0cm]90:}] at (3.34,5.373){};
\node [point, label={[label distance=0cm]90:}] at (4.66,2.627){};
\node [point, label={[label distance=0cm]90:}] at (4.66,5.373){};
\definecolor{darkgreen}{rgb}{0.2, 0.7, 0.4}
\tikzstyle{point}=[ fill=darkgreen, draw=darkgreen,circle, inner sep=0pt,minimum width=3pt,minimum height=3pt]

\node [point, label={[label distance=0cm]0:}] at (4,4){};

\node [point, label={[label distance=0cm]90:}] at (3.67,2.755){};

\node [point, label={[label distance=0cm]90:}] at (3.67,5.245){};

\node [point, label={[label distance=0cm]90:}] at (4.33,2.755){};

\node [point, label={[label distance=0cm]90:}] at (4.33,5.245){};
\end{tikzpicture}\quad
\begin{tikzpicture}
[scale=1.0, every node/.style={font=\footnotesize}]
\tikzstyle{point}=[ fill,circle, inner sep=0pt,minimum width=3pt,minimum height=3pt]

\filldraw[fill=gray!20!, opacity=0.3] (4,4) -- (3.34,5.373) arc (75:44:1.4142)--cycle;
\filldraw[fill=gray!20!, opacity=0.3] (4,4) -- (4,5) arc (135:104:1.4142)--cycle;
\filldraw[fill=gray!20!, opacity=0.3] (4,4) -- (4,3) arc (225:256:1.4142)--cycle;
\filldraw[fill=gray!20!, opacity=0.3] (4,4) -- (4,3) arc (-44:-75:1.4142)--cycle;

\draw (4,3) to (4,5);
\draw (4,4) to (3.67,2.755);
\draw (4,4) to (3.34,5.373);
\draw (4,4) to (3.67, 5.245);
\draw (4,4) to (4.66,2.627);
\draw (4,4) to (4.33,2.755);
\draw (4,4) to (4.66,5.373);
\draw (4,4) to (4.33,5.245);
\draw (4,4) to (3.34,2.627);


\draw (4,4) to (3.4,4);
\draw (4,4) to (3.5,4.3317);
\draw (4,4) to (3.5,3.6683);

\draw (4,4) to (4.6,4);
\draw (4,4) to (4.5,4.3317);
\draw (4,4) to (4.5,3.6683);
\draw[dotted] (4,5) arc (45:315:1.4142);
\draw (4,5) arc (45:90:1.4142);
\draw (3,2.5858) arc (270:315:1.4142);
\draw[dotted] (4,3) arc (225:495:1.4142);
\draw (4,5) arc (135:90:1.4142);
\draw (4,3) arc (225:270:1.4142);
\draw[dotted] (4.5,4) arc (0:65:0.5);
\draw[dotted] (4.5,4) arc (0:-65:0.5);
\draw[dotted] (3.5,4) arc (180:115:0.5);
\draw[dotted] (3.5,4) arc (180:245:0.5);

\draw (4.22,4.07) node {{\tiny$q$}};
\draw (4,5.15) node {{\tiny$s$}};
\draw (4,2.85) node {{\tiny$t$}};

\tikzstyle{point}=[ fill=blue, draw=blue,circle, inner sep=0pt,minimum width=3pt,minimum height=3pt]
\node [point, label={[label distance=0cm]110:}] at (4,4){};

\tikzstyle{point}=[ fill=red, draw=red,circle, inner sep=0pt,minimum width=3pt,minimum height=3pt]
\node [point, label={[label distance=0cm]90:}] at (4,3){};
\node [point, label={[label distance=0cm]90:}] at (4,5){};
\node [point, label={[label distance=0cm]90:}] at (3.34,2.627){};
\node [point, label={[label distance=0cm]90:}] at (3.34,5.373){};
\node [point, label={[label distance=0cm]90:}] at (4.66,2.627){};
\node [point, label={[label distance=0cm]90:}] at (4.66,5.373){};
\definecolor{darkgreen}{rgb}{0.2, 0.7, 0.4}
\tikzstyle{point}=[ fill=darkgreen, draw=darkgreen,circle, inner sep=0pt,minimum width=3pt,minimum height=3pt]

\node [point, label={[label distance=0cm]90:}] at (3.67,2.755){};
\node [point, label={[label distance=0cm]90:}] at (3.67,5.245){};
\node [point, label={[label distance=0cm]90:}] at (4.33,2.755){};
\node [point, label={[label distance=0cm]90:}] at (4.33,5.245){};
\end{tikzpicture}
\caption{The balanced contraction of a pair $(p,q)$ showing also the change in a coloring.}
\end{figure}
\end{center}

Observe that, by the uniqueness of a coloring, the first condition in Definition \ref{def:balancedcontraction} does either hold for any proper $3$-coloring or none.
Note also that since $\st_\Delta(p)$ and $\st_\Delta(q)$ are simplicial $2$-balls, the second condition implies that $\st_\Delta(p) \cup \st_\Delta(q)$ is a simplicial $2$-ball, so its boundary, used in the definition of $\bcont(\Delta)$, is indeed well defined.

It is easy to see that if $\Delta$ is a balanced simplicial $2$-sphere and $(p,q)$ is a contractible pair in $\Delta$, then $\bcont(\Delta)$ is a balanced simplicial $2$-sphere.
The following result was proved by Batagelj \cite{Ba}\footnote{Batagelj phrased his result for simplicial spheres where all vertex degrees are even.}. 




\begin{theorem}
\label{2.7}
Let $\Delta$ be a balanced simplicial $2$-sphere which is not the boundary of a $3$-crosspolytope.
Then $\Delta$ has a missing triangle or a contractible pair $(p,q)$.
\end{theorem}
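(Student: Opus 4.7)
The plan is to assume that $\Delta$ has no missing triangle (otherwise we are already in the first case of the conclusion) and produce a contractible pair. The first step is to locate a vertex of small degree. By the Three Color Theorem recalled above, every vertex of a balanced simplicial $2$-sphere has even degree, hence degree at least $4$. Writing $f_0$ and $f_1$ for the number of vertices and edges of $\Delta$, Euler's relation $f_1=3f_0-6$ yields $\sum_v \deg_\Delta(v) = 2f_1 = 6f_0-12 < 6f_0$, so some vertex $v$ must satisfy $\deg_\Delta(v)=4$. Its link is a $4$-cycle $C = u_1 u_2 u_3 u_4$, and after fixing a proper $3$-coloring $\kappa$ with $\kappa(v)=1$, alternation of colors along $C$ forces $\kappa(u_1)=\kappa(u_3)=2$ and $\kappa(u_2)=\kappa(u_4)=3$.

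Next I would convert the contractibility condition into a purely graph-theoretic criterion. Assuming $\Delta$ has no missing triangle, for any pair $(p,q)$ of same-colored (hence non-adjacent) vertices, the intersection $\st_\Delta(p) \cap \st_\Delta(q)$ coincides with the subgraph of $\Delta$ induced on the common neighborhood of $p$ and $q$: every edge between two common neighbors automatically completes to triangles with both $p$ and $q$. Hence $(p,q)$ is a contractible pair if and only if $p$ and $q$ share exactly three common neighbors whose induced subgraph is a $2$-edge path. Applied to the opposite pair $(u_1,u_3)$, the set $\{v,u_2,u_4\}$ lies in their common neighborhood, and since $\{u_1,u_3\}$ and $\{u_2,u_4\}$ cannot be edges of $\Delta$ (otherwise $\{u_i,u_j,v\}$ would be a missing triangle), the induced graph on $\{v,u_2,u_4\}$ is exactly the $2$-path $u_2\,v\,u_4$. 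So $(u_1,u_3)$ is contractible iff there is no further common neighbor of $u_1$ and $u_3$, and symmetrically for $(u_2,u_4)$.

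The heart of the argument is the case in which neither opposite pair is contractible. Then there exist $x$ in the common neighborhood of $u_1,u_3$ lying outside $\{v,u_2,u_4\}$ and $y$ in the common neighborhood of $u_2,u_4$ lying outside $\{v,u_1,u_3\}$. The cycle $C$ separates the $2$-sphere into $\st_\Delta(v)$ and a simplicial $2$-ball $D$ with $\partial D=C$, and both $x$ and $y$ lie in $\mathrm{int}(D)$. If $x=y$ is the unique interior vertex of $D$, then $D$ equals the closed star of $x$, so $\Delta$ has vertex set $\{v,x,u_1,u_2,u_3,u_4\}$ partitioned into three antipodal color classes, making it the boundary of the $3$-crosspolytope---the excluded case. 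Otherwise, the plan is to descend inside $D$ and produce a contractible pair there: the candidate is $(v,x)$ when $\kappa(x)=1$, and $(x,u_2)$ or $(x,u_4)$ when $\kappa(x)=3$, with the common-neighbor criterion reverified using the fact that $D$ is a $2$-ball bounded by $C$. The main obstacle is precisely this descent: iteratively ruling out cascades of extra common neighbors inside $D$, which is best handled by induction on $|V(\Delta)|$ together with a careful analysis of the separating $4$-cycles in $\Delta$, and this is where the bulk of the technical work lies.
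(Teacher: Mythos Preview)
The paper does not prove Theorem~\ref{2.7}: it is stated as a result of Batagelj \cite{Ba} and used as a black box. So there is no in-paper argument to compare against; the relevant question is whether your sketch can stand on its own.

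Your opening is fine. The degree-sum count does locate a vertex $v$ of degree $4$, and under the no-missing-triangle assumption your reformulation of contractibility of a same-colored pair $(p,q)$ as ``exactly three common neighbors inducing a $2$-path'' is correct. The analysis of the opposite pairs $(u_1,u_3)$ and $(u_2,u_4)$ in the link of $v$, and the identification of the crosspolytope when the complementary disk $D$ has a unique interior vertex, are also correct.

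The genuine gap is exactly where you place it, and it is not a matter of routine bookkeeping. Your proposed candidate $(v,x)$ when $\kappa(x)=1$ need not be contractible: the common neighbors of $v$ and $x$ lie in $\{u_1,u_2,u_3,u_4\}$, and a priori only $u_1,u_3$ are guaranteed to be among them, which gives two common neighbors rather than three; nothing you have proved forces $x$ to be adjacent to exactly one of $u_2,u_4$. The candidates $(x,u_2)$ or $(x,u_4)$ when $\kappa(x)=3$ have the same defect: you have no control over the common neighborhood of $x$ and $u_i$ beyond the single vertex you named. More structurally, you invoke induction on $|V(\Delta)|$, but $D$ is a ball, not a balanced $2$-sphere, and you have not explained how to cap off a separating $4$-cycle to obtain a smaller balanced sphere in which (i) the no-missing-triangle hypothesis is preserved and (ii) a contractible pair or missing triangle found there transfers back to $\Delta$ and does not involve the artificial cone point. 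Batagelj's proof handles this by a finer local analysis around degree-$4$ vertices; as written, your descent is a hope rather than an argument.
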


\section{Lefschetz properties of $2$-spheres}\label{sect:Lefschetz}

In this section, we study the strong Lefschetz property of simplicial $2$-spheres.
Throughout this section,
we assume that $\mathrm{char}(\FF)$ is not $2$ or $3$.

Let $\Delta$ be a simplicial $2$-sphere, and let $\Theta=\theta_1,\theta_2,\theta_3$ be an l.s.o.p.\ for $\FF[\Delta]$.
Then
$A= \FF[\Delta]/\Theta \FF[\Delta]$ is a Gorenstein algebra with
$$A=A_0 \oplus A_1 \oplus A_2 \oplus A_3$$
and $A_0 \cong A_3 \cong \FF$ (see \cite[II, Section 6]{Stan:Gr}).
Since any monomial of degree $3$ in $\FF[x_1,\dots,x_n]$ can be written as a linear combination of cubics of linear forms
if $\mathrm{char}(\FF)$ is not $2$ or $3$,
$\{ w^3: w \in A_1\}$ spans $A_3$.
Since $A_3$ is non-zero,
this implies that $\times w^3 : A_0 \to A_3$ is bijective for a generic $w$.
Thus $A$ has the SLP if and only if there is a linear form $w$ such that
$$\times w :A_1 \to A_2$$
is bijective.
Moreover, since $A_1 \cong A_2$ as $\FF$-vector spaces,
to prove the above bijectivity,
it suffices to prove that the multiplication map $\times w:A_1 \to A_2$ is surjective.
Thus, in this setting, $A$ has the SLP if and only if
$$\big(\FF[\Delta]/(\Theta,w)\FF[\Delta]) \big)_2=0$$
for some linear form $w$.

Let $\Delta$ be a simplicial complex.
We identify linear forms in $S=\FF[x_v:v\in V(\Delta)]$ with their image in $\FF[\Delta]$.
Also, for a subcomplex $\Gamma$ of $\Delta$,
we often regard $\FF[\Gamma]$ as an $S$-module.
Since there is a surjection $\FF[\Delta]/\Theta \FF[\Delta] \to \FF[\Gamma]/\Theta \FF[\Gamma]$ for any sequence $\Theta=\theta_1,\dots,\theta_k \in S$ if $\Gamma \subseteq \Delta$, the following property holds.

\begin{lemma}
\label{3.1}
Let $\Delta$ be a simplicial complex, and let $\Gamma$ be a subcomplex of $\Delta$ having the same dimension as $\Delta$. Then every l.s.o.p.\ for $\FF[\Delta]$ is an l.s.o.p.\ for $\FF[\Gamma]$.
\qed
\end{lemma}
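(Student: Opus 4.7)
The plan is to exploit the natural surjection between Stanley--Reisner rings induced by the inclusion of simplicial complexes, as already suggested by the sentence preceding the lemma.

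First, since $\Gamma \subseteq \Delta$, every non-face of $\Delta$ is a non-face of $\Gamma$, so $I_\Delta \subseteq I_\Gamma$. This inclusion yields a graded surjection $\pi \colon \FF[\Delta] \twoheadrightarrow \FF[\Gamma]$ that acts as the identity on variables $x_v$ with $\{v\} \in \Gamma$ and sends the remaining variables to zero. Given an l.s.o.p.\ $\Theta = \theta_1,\dots,\theta_d$ for $\FF[\Delta]$ with $d = \dim \Delta + 1$, the images $\pi(\theta_1),\dots,\pi(\theta_d)$ are linear forms in $\FF[\Gamma]$, and $\pi$ descends to a surjection of graded algebras
$$\FF[\Delta]/\Theta \FF[\Delta] \twoheadrightarrow \FF[\Gamma]/\Theta \FF[\Gamma].$$
Since the source is finite-dimensional over $\FF$ by the l.s.o.p.\ hypothesis, so is the target.

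Next, I would invoke the standard identity $\dim \FF[\Gamma] = \dim \Gamma + 1$, which by the equidimensionality hypothesis $\dim \Gamma = \dim \Delta$ equals $d$. A sequence of $d = \dim \FF[\Gamma]$ homogeneous elements whose quotient is Artinian is automatically a homogeneous system of parameters; in our case the elements are linear, so $\Theta$ is an l.s.o.p.\ for $\FF[\Gamma]$.

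There is no substantial obstacle here: the argument is essentially a two-step application of the fact that finite dimensionality passes through surjective algebra maps, combined with the matching of Krull dimensions afforded by the hypothesis. I would only remark in passing that the equidimensionality assumption is essential, since otherwise the number $d$ of forms would exceed the Krull dimension of $\FF[\Gamma]$, making the conclusion vacuous.
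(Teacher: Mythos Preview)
Your argument is correct and matches the paper's approach exactly: the paper simply observes, in the sentence immediately preceding the lemma, that the surjection $\FF[\Delta]/\Theta\FF[\Delta]\to\FF[\Gamma]/\Theta\FF[\Gamma]$ forces the target to be finite-dimensional, and then marks the lemma with \qedsymbol. Your added remark about matching Krull dimensions is a welcome clarification but not strictly needed under the paper's definition of l.s.o.p., which only asks for $d=\dim\Delta+1$ linear forms with Artinian quotient; one small quibble is that without equidimensionality the conclusion would be \emph{false} (too many forms to be a system of parameters) rather than ``vacuous.''
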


The next statement was proved by Babson and Nevo \cite[Theorem 6.1]{Babson-Nevo}.

\begin{lemma}[Babson--Nevo]
\label{3.2}
Let $\Delta=\Gamma_1 \#_\sigma \Gamma_2$ be a simplicial $2$-sphere, $\Theta=\theta_1,\theta_2,\theta_3$ a common l.s.o.p.\ for $\FF[\Delta]$ and $\FF[\langle \sigma \rangle]$,
and let $w$ be a linear form in $\FF[x_v:v\in V(\Delta)]$.
If $(\FF[\Gamma_i]/(\Theta,w) \FF[\Gamma_i])_2=0$ for $i=1,2$, then
$(\FF[\Delta]/(\Theta,w) \FF[\Delta])_2=0$.
\end{lemma}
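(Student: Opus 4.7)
The plan is to replace $\Delta$ with $\Delta \cup \{\sigma\}$ and exploit the Mayer--Vietoris short exact sequence of Stanley--Reisner rings. Write $\sigma = \{a, b, c\}$, let $R = \FF[x_v : v \in V(\Delta)]$, set $J = (\Theta, w) \subset R$, and abbreviate $M/J$ for $M/JM$ when $M$ is an $R$-module. Since $\FF[\Delta] = \FF[\Delta \cup \{\sigma\}]/(x_a x_b x_c)$ and the kernel of this surjection lives in internal degrees $\geq 3$, we have $(\FF[\Delta]/J)_2 = (\FF[\Delta \cup \{\sigma\}]/J)_2$, so it suffices to show that the right-hand side vanishes.

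Because $\Gamma_1 \cup \Gamma_2 = \Delta \cup \{\sigma\}$ and $\Gamma_1 \cap \Gamma_2 = \langle \sigma \rangle$, the standard Mayer--Vietoris sequence gives a short exact sequence of graded $R$-modules
\begin{equation*}
0 \to \FF[\Delta \cup \{\sigma\}] \to \FF[\Gamma_1] \oplus \FF[\Gamma_2] \to \FF[\langle \sigma \rangle] \to 0,
\end{equation*}
the left arrow sending $f$ to the pair of its restrictions, and the right arrow sending $(f_1, f_2)$ to $f_1|_{\sigma} - f_2|_{\sigma}$. Tensoring over $R$ with $R/J$ and extracting the tail of the associated long exact $\mathrm{Tor}$-sequence yields
\begin{equation*}
\mathrm{Tor}_1^R(\FF[\langle \sigma \rangle], R/J) \to \FF[\Delta \cup \{\sigma\}]/J \to \FF[\Gamma_1]/J \oplus \FF[\Gamma_2]/J \to \FF[\langle \sigma \rangle]/J \to 0.
\end{equation*}
The third term vanishes in degree~$2$ by hypothesis; the fourth term vanishes there because $\Theta$ restricts to three linearly independent linear forms on $\FF[\langle \sigma \rangle] = \FF[x_a, x_b, x_c]$, giving $\FF[\langle \sigma \rangle]/\Theta \cong \FF$ concentrated in degree~$0$. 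Consequently $(\FF[\Delta \cup \{\sigma\}]/J)_2$ is a quotient of $\mathrm{Tor}_1^R(\FF[\langle \sigma \rangle], R/J)_2$, and the lemma reduces to showing this $\mathrm{Tor}$-term vanishes in internal degree~$2$.

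Since $\theta_1, \theta_2, \theta_3$ are linearly independent linear forms in $R$ (any l.s.o.p.\ of the three-dimensional ring $\FF[\Delta]$ must be) and $R/(\Theta)$ is a polynomial ring, either $w \in (\Theta)$ in $R$, or $\theta_1, \theta_2, \theta_3, w$ is a regular sequence in $R$. Either way, the Koszul complex $K$ on the appropriate sub-sequence is a free resolution of $R/J$, and $K \otimes_R \FF[\langle \sigma \rangle]$ is the Koszul complex on the restrictions $\bar\theta_1, \bar\theta_2, \bar\theta_3$ (and possibly $\bar w$) inside $\FF[x_a, x_b, x_c]$. In the first case the restricted sequence is regular, so the Koszul complex is acyclic in positive homological degree and $\mathrm{Tor}_1^R = 0$ identically. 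In the second case $\bar\theta_1, \bar\theta_2, \bar\theta_3$ already spans $\FF[x_a, x_b, x_c]_1$, so $\bar w$ lies in the ideal they generate; the tensor decomposition $K(\bar\theta_1, \bar\theta_2, \bar\theta_3, \bar w) \cong K(\bar\theta_1, \bar\theta_2, \bar\theta_3) \otimes K(\bar w)$, combined with the quasi-isomorphism $K(\bar\theta_1, \bar\theta_2, \bar\theta_3) \simeq \FF$ and the fact that $\bar w$ acts trivially on $\FF$, identifies $\mathrm{Tor}_1^R(\FF[\langle \sigma \rangle], R/J)$ with one copy of $\FF$ concentrated in internal degree~$1$, whose degree-$2$ component is zero.

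The main point of care is verifying the exactness of the Mayer--Vietoris sequence at the monomial-basis level, and confirming the regular-sequence hypothesis needed to invoke the Koszul resolution. Neither is deep, but both must be recorded before invoking the homological machinery; once in place, combining the steps yields $(\FF[\Delta \cup \{\sigma\}]/J)_2 = 0$, and hence $(\FF[\Delta]/J)_2 = 0$, as required.
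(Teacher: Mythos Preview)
The paper does not supply its own proof of this lemma: it is quoted verbatim from Babson--Nevo \cite[Theorem~6.1]{Babson-Nevo} and simply cited. So there is no in-paper argument to compare against.

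Your argument is correct. The reduction to $\Delta\cup\{\sigma\}$ is harmless in degree~$2$, the Mayer--Vietoris sequence for Stanley--Reisner rings is exact because $I_{\Gamma_1\cup\Gamma_2}=I_{\Gamma_1}\cap I_{\Gamma_2}$ and $I_{\Gamma_1\cap\Gamma_2}=I_{\Gamma_1}+I_{\Gamma_2}$, and your Tor computation is right: the key point is that $\Theta$ being an l.s.o.p.\ for $\FF[\langle\sigma\rangle]=\FF[x_a,x_b,x_c]$ forces $\bar\theta_1,\bar\theta_2,\bar\theta_3$ to span the linear forms there, so the Koszul homology $H_1$ over $\FF[x_a,x_b,x_c]$ on $(\bar\theta_1,\bar\theta_2,\bar\theta_3,\bar w)$ is either zero (if $w\in(\Theta)$) or a single copy of $\FF$ sitting in internal degree~$1$. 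Either way the degree-$2$ piece of $\mathrm{Tor}_1$ vanishes, and the exact sequence finishes the job. This Mayer--Vietoris/Koszul route is essentially the standard way such gluing lemmas are proved (and is in the spirit of the Babson--Nevo argument), so your proof would be an acceptable self-contained substitute for the citation.
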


Recall that a balanced simplicial $2$-sphere $\Delta$ is said to have the colored SLP over $\FF$ if $\FF[\Delta]/\Theta \FF[\Delta]$ has the SLP,
where $\Theta$ is the colored s.o.p.\ for $\FF[\Delta]$.
Lemma \ref{3.2}, applied to the case that $\Theta$ is the colored s.o.p.\ implies the following corollary.

\begin{corollary}
\label{3.3}
Let $\Delta=\Gamma_1 \#_\sigma \Gamma_2$ be a balanced simplicial $2$-spheres.
If both $\Gamma_1$ and $\Gamma_2$ have the colored SLP over $\FF$, then so does $\Delta$.
\qed
\end{corollary}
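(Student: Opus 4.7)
The plan is to invoke Lemma~\ref{3.2} with $\Theta$ equal to the colored s.o.p.\ of $\Delta$. By the discussion at the beginning of this section, since $A = \FF[\Delta]/\Theta\FF[\Delta]$ is Gorenstein of socle degree $3$ with $A_1 \cong A_2$, the colored SLP for $\Delta$ is equivalent to the existence of a linear form $w \in \FF[\Delta]_1$ with $(\FF[\Delta]/(\Theta,w)\FF[\Delta])_2 = 0$, and the goal therefore becomes to produce such a $w$ from the corresponding data on $\Gamma_1$ and $\Gamma_2$.

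First I verify the l.s.o.p.\ hypotheses of Lemma~\ref{3.2}. The colored s.o.p.\ of $\Delta$ is an l.s.o.p.\ for $\FF[\Delta]$ by Stanley's theorem. Because the three vertices of the shared facet $\sigma$ carry three distinct colors under any proper $3$-coloring $\kappa$ of $\Delta$, each $\theta_j$ projects in $\FF[\langle \sigma \rangle]$ to the single variable $x_{v_j}$, where $v_j \in \sigma$ is the unique vertex of color $j$; the three resulting forms are clearly an l.s.o.p.\ for $\FF[\langle\sigma\rangle]$. In addition, via the natural surjection $\FF[\Delta] \twoheadrightarrow \FF[\Gamma_i]$ that sends $x_v \mapsto 0$ for $v \notin V(\Gamma_i)$, the colored s.o.p.\ of $\Delta$ maps precisely to the colored s.o.p.\ of $\Gamma_i$ (with respect to the restriction of $\kappa$, which by the uniqueness of proper $3$-colorings is the coloring used to define $\Gamma_i$'s colored s.o.p.).

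For the remaining hypothesis of Lemma~\ref{3.2}, I need a single $w \in \FF[\Delta]_1$ whose images in $\FF[\Gamma_i]_1$ satisfy $(\FF[\Gamma_i]/(\Theta,w)\FF[\Gamma_i])_2 = 0$ for both $i = 1, 2$. By the colored SLP for each $\Gamma_i$, the set of linear forms in $\FF[\Gamma_i]_1$ with this vanishing property is non-empty, and it is Zariski open, being defined by the non-vanishing of a maximal minor of the multiplication map $\times w : (\FF[\Gamma_i]/\Theta)_1 \to (\FF[\Gamma_i]/\Theta)_2$. The linear surjection $\FF[\Delta]_1 \twoheadrightarrow \FF[\Gamma_i]_1$ pulls back this non-empty open locus to a non-empty Zariski open subset of $\FF[\Delta]_1$, and since $\FF$ is infinite the intersection of these two open subsets (for $i = 1, 2$) is non-empty. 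Any $w$ in this intersection satisfies the hypotheses of Lemma~\ref{3.2}, which then yields $(\FF[\Delta]/(\Theta,w)\FF[\Delta])_2 = 0$ and hence the colored SLP for $\Delta$. The only mildly delicate point is the transfer of genericity from the $\Gamma_i$ to $\Delta$; the remainder of the argument is bookkeeping about how the colored s.o.p.\ of $\Delta$ restricts to the two connected-sum pieces.
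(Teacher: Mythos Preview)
Your proof is correct and follows exactly the route the paper intends: Corollary~\ref{3.3} is stated as an immediate consequence of Lemma~\ref{3.2} with $\Theta$ taken to be the colored s.o.p., and you have simply spelled out the details (that $\Theta$ is an l.s.o.p.\ for $\FF[\langle\sigma\rangle]$, that it restricts to the colored s.o.p.\ of each $\Gamma_i$, and the standard Zariski-open argument for choosing a common $w$). One minor quibble: there is no ring surjection $\FF[\Delta]\twoheadrightarrow\FF[\Gamma_i]$ since $\sigma\in\Gamma_i\setminus\Delta$, but you only use the surjection in degree~$1$, where $\FF[\Delta]_1=S_1\twoheadrightarrow\FF[\Gamma_i]_1$ is fine.
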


We need two more technical statements.

\begin{lemma}
\label{3.5}
Let $\Delta$ be a $2$-dimensional simplicial complex, $\{s,w\},\{t,w\} \in \Delta$,
and $u$ a vertex which is not in $\Delta$.
Let $\Sigma =\langle \{s,w,u\},\{t,w,u\} \rangle$, $\Gamma=\Delta \cup \Sigma$,
$\Theta$ an l.s.o.p.\ for $\FF[\Gamma]$, and let $w$ be a linear form in $\FF[x_v:v \in V(\Gamma)]$.
If $(\FF[\Delta]/(\Theta,w)\FF[\Delta])_2=0$
and $w$ is non-zero in $\FF[\Sigma]/\Theta \FF[\Sigma]$, then
$(\FF[\Gamma]/(\Theta,w)\FF[\Gamma])_2=0$.
\end{lemma}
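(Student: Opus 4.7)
The plan is to compare $\FF[\Gamma]$ with $\FF[\Delta]$ through a short exact sequence that isolates the contribution of the new vertex $u$, and then exploit that $\FF[\Sigma]$ becomes very small after killing an l.s.o.p.

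\textbf{Step 1: a short exact sequence.} Since $u$ is the only vertex of $\Gamma$ outside $\Delta$, every face of $\Gamma$ that is not already in $\Delta$ contains $u$. Hence the natural surjection $\FF[\Gamma] \twoheadrightarrow \FF[\Delta]$ has kernel the principal ideal $x_u \FF[\Gamma]$. Because $u$ has no neighbor in $\Gamma$ outside $V(\Sigma)$, multiplication by $x_u$ annihilates every $x_v$ for $v \in V(\Gamma) \setminus V(\Sigma)$, so in fact $x_u \FF[\Gamma] = x_u \FF[\Sigma]$. A short calculation, using that $x_u$ is coprime to the single minimal generator $x_s x_t$ of $I_\Sigma$ in the polynomial ring on $V(\Sigma)$, yields $\mathrm{Ann}_{\FF[\Sigma]}(x_u) = 0$. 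Thus multiplication by $x_u$ is an isomorphism of graded $S$-modules $\FF[\Sigma](-1) \xrightarrow{\cong} x_u \FF[\Gamma]$, and we obtain the short exact sequence
\[
0 \longrightarrow \FF[\Sigma](-1) \xrightarrow{\;\cdot\, x_u\;} \FF[\Gamma] \longrightarrow \FF[\Delta] \longrightarrow 0.
\]

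\textbf{Step 2: descending to degree $2$ modulo $(\Theta, w)$.} Applying the right-exact functor $- \otimes_S S/(\Theta, w)$ and reading off the degree-$2$ piece produces the exact sequence
\[
\bigl(\FF[\Sigma]/(\Theta, w)\FF[\Sigma]\bigr)_1 \longrightarrow \bigl(\FF[\Gamma]/(\Theta, w)\FF[\Gamma]\bigr)_2 \longrightarrow \bigl(\FF[\Delta]/(\Theta, w)\FF[\Delta]\bigr)_2 \longrightarrow 0.
\]
The rightmost term vanishes by hypothesis. For the leftmost, $\Sigma$ is a simplicial $2$-ball (two triangles sharing an edge), hence shellable and therefore Cohen--Macaulay, with $h$-vector $(1,1,0,0)$. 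By Lemma \ref{3.1} the sequence $\Theta$ is also an l.s.o.p.\ for $\FF[\Sigma]$, so $\FF[\Sigma]/\Theta\FF[\Sigma]$ has Hilbert function $(1,1,0,0)$; in particular $(\FF[\Sigma]/\Theta)_1$ is one-dimensional. The assumption that $w$ is nonzero in $\FF[\Sigma]/\Theta$ then forces $w$ to span this one-dimensional space, whence $(\FF[\Sigma]/(\Theta, w))_1 = 0$. Squeezed between two zeros, the middle term must vanish, yielding the desired conclusion.

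\textbf{Main obstacle.} The only genuinely non-formal step is the identification $x_u \FF[\Gamma] \cong \FF[\Sigma](-1)$, equivalently the vanishing of $\mathrm{Ann}_{\FF[\Sigma]}(x_u)$: given a lift $f \in S$ of an annihilator element, one splits $f$ according to the partition $V(\Gamma) = V(\Sigma) \sqcup (V(\Gamma)\setminus V(\Sigma))$, reduces to the $V(\Sigma)$-part, and uses unique factorization in the small polynomial ring to cancel the factor $x_u$. This is where the specific structure of $\Sigma$ as a pair of triangles sharing a common edge (so that the restricted ideal $I_\Sigma$ is generated by the single squarefree monomial $x_s x_t$) actually enters the argument; everything else is formal homological algebra together with the standard fact that a shellable $2$-ball is Cohen--Macaulay with the prescribed $h$-vector.
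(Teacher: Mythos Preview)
Your proof is correct and follows essentially the same route as the paper: the same short exact sequence $0 \to \FF[\Sigma](-1) \xrightarrow{\cdot x_u} \FF[\Gamma] \to \FF[\Delta] \to 0$, tensored with $S/(\Theta,w)$ and read off in degree~$2$. The only cosmetic difference is that the paper computes $\FF[\Sigma]/\Theta\FF[\Sigma] \cong \FF[x]/(x^2)$ by hand from the presentation $\FF[x_s,x_t,x_u,x_w]/(x_sx_t,\theta_1,\theta_2,\theta_3)$, whereas you invoke shellability and the $h$-vector $(1,1,0,0)$; your version also spells out the injectivity of $\cdot\, x_u$ more explicitly than the paper does.
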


\begin{proof}
Let $S=\FF[x_v:v \in V(\Gamma)]$.
We have the following exact sequence of $S$-modules
$$0 \longrightarrow \FF[\Sigma] \stackrel{\times x_u\ } \longrightarrow \FF[\Gamma] \longrightarrow \FF[\Delta] \longrightarrow 0.$$
By the right-exactness of the tensor product,
tensoring the above exact sequence with $S / (\Theta,w) S$ yields the exact sequence
\begin{align}
\label{exact1}
(\FF[\Sigma]/(\Theta,w) \FF[\Sigma])_1
\stackrel {\times x_u} \to & (\FF[\Gamma]/(\Theta,w) \FF[\Gamma])_2 \to (\FF[\Delta]/(\Theta,w) \FF[\Delta])_2 \to 0.
\end{align}
From
$$\FF[\Sigma]/\Theta\FF[\Sigma] =\FF[x_s,x_t,x_u,x_w]/(x_sx_t,\theta_1,\theta_2,\theta_3) \cong \FF[x]/(x^2),$$
we infer $(\FF[\Sigma]/(\Theta,w)\FF[\Sigma])_1=0$,
if $w$ is non-zero in $\FF[\Sigma]/\Theta\FF[\Sigma]$.
Now, the desired property follows from \eqref{exact1} and the assumption
$(\FF[\Delta]/(\Theta,w)\FF[\Delta])_2=0$.\end{proof}

The following statement is crucial in our proof of Theorem \ref{thm:main1}.

\begin{lemma}
\label{3.7}
Let $\Delta$ be a balanced simplicial $2$-sphere, and let
$(p,q)$ be a contractible pair in $\Delta$.
If $\bcont(\Delta)$ has the colored SLP, then
$\Delta$ has the colored SLP.
\end{lemma}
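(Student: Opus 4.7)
The plan is to lift a Lefschetz element from $\Delta' := \bcont(\Delta)$ to a Lefschetz element for $\Delta$ by adding correction terms involving $x_p$ and $x_w$. Choose colors so that $\kappa(p)=\kappa(q)=1$, $\kappa(w)=2$, and $\kappa(s)=\kappa(t)=3$. Then the two colored s.o.p.'s are related by $\theta_1=\theta_1'+x_p$, $\theta_2=\theta_2'+x_w$, $\theta_3=\theta_3'$, so that inside $\FF[\Delta]/\Theta\FF[\Delta]$ the elements $\theta_1',\theta_2',\theta_3'$ of $\Theta'$ reduce respectively to $-x_p,\,-x_w,\,0$. Moreover, the contractibility condition $\st_\Delta(p)\cap\st_\Delta(q)=\langle\{s,w\},\{w,t\}\rangle$ forces $\{p,q\}\notin\Delta$ (otherwise $\{p\}$ or $\{q\}$ would lie in $\langle\{s,w\},\{w,t\}\rangle$, contradicting $p,q\notin\{s,w,t\}$), which gives the crucial relation $x_p x_q=0$ in $\FF[\Delta]$.

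By the hypothesis, together with the reduction valid in $\mathrm{char}(\FF)\neq 2,3$ discussed just before Lemma \ref{3.1}, pick a linear form $\omega\in\FF[V(\Delta')]_1$ with $(\FF[\Delta']/(\Theta',\omega))_2=0$. Lift $\omega$ to the obvious linear form in $\FF[V(\Delta)]$ (no $x_p$- or $x_w$-term) and take as candidate Lefschetz element for $\Delta$
\[
W\;:=\;\omega+\alpha\, x_p+\beta\, x_w,
\]
with $\alpha,\beta\in\FF$ to be chosen. The goal becomes to prove $(\FF[\Delta]/(\Theta,W))_2=0$ for a generic choice of $\alpha,\beta$.

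Note that the modification $\Delta'\leadsto\Delta$ is purely local. Set $B:=\st_\Delta(p)\cup\st_\Delta(q)$, $B':=\st_{\Delta'}(q)$, and $\Delta_0:=\{\sigma\in\Delta:\sigma\cap\{p,q,w\}=\emptyset\}$. Both $B$ and $B'$ are simplicial $2$-balls with common boundary cycle $\Gamma:=\partial B=\partial B'$, and one has $\Delta=\Delta_0\cup B$, $\Delta'=\Delta_0\cup B'$, and $\Delta_0\cap B=\Delta_0\cap B'=\Gamma$. I would apply the two Mayer--Vietoris short exact sequences
\[
0\to\FF[\Delta]\to\FF[\Delta_0]\oplus\FF[B]\to\FF[\Gamma]\to 0,\qquad
0\to\FF[\Delta']\to\FF[\Delta_0]\oplus\FF[B']\to\FF[\Gamma]\to 0,
\]
tensor both with $R/(\Theta,W)$, and compare the resulting long Tor sequences: the $\Delta'$-hypothesis (via $\theta_i'\equiv -x_?$) controls the $\Delta_0$- and $\Gamma$-contributions in degree $2$, reducing the desired vanishing to a purely local Lefschetz statement for $\FF[B]/(\Theta,W)$. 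As an alternative route, one can try to build $\Delta$ from $\Delta_0$ by iterated applications of Lemma \ref{3.5}, attaching the interior vertices $w$ and $p$ (in a suitable order) along with their incident triangles.

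\textbf{Main obstacle.} The hardest step is expected to be the local analysis inside the $2$-ball $B$. The vertex $w$ has link $s-p-t-q$ (a $4$-cycle) in $B$---this is why the move is called a $4$-contraction---and I would need to show that the two free parameters $\alpha,\beta$ give enough flexibility to kill any residual degree-$2$ obstruction supported near $w$. The non-face relation $x_p x_q=0$ will play a crucial role, and the characteristic hypothesis $\mathrm{char}(\FF)\neq 2,3$ is expected to reappear as a non-degeneracy requirement for a small linear system in $\alpha,\beta$ produced by this local analysis.
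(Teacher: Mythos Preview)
Your proposal is not a proof: you explicitly flag the ``main obstacle'' (the local analysis inside the ball $B$) and leave it unresolved. That is precisely the heart of the matter, and there is no reason to expect two scalar parameters $\alpha,\beta$ to suffice. Concretely, $B=\st_\Delta(p)\cup\st_\Delta(q)$ is a $2$-ball with three interior vertices $p,q,w$, so $(\FF[B]/\Theta)_2$ is $3$-dimensional; killing it with a single linear form is a genuine constraint, and your Mayer--Vietoris route also produces a $\mathrm{Tor}_1(\FF[\partial B],S/(\Theta,W))$ term in degree $2$ that you have not controlled. Your ``alternative route'' via Lemma~\ref{3.5} cannot build $\Delta$ from $\Delta_0$ either: Lemma~\ref{3.5} attaches a new vertex lying in exactly two triangles, whereas each of $p,q,w$ has a full cycle as its link in $\Delta$.

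The paper's argument is entirely different and avoids any local Lefschetz analysis. It first enlarges $\bcont(\Delta)$ by two bow-ties to a complex $\Gamma$ on the vertex set $V(\Delta)$ (this is where Lemma~\ref{3.5} is used, twice), so that $(I_\Gamma+(\Theta,\omega))_2=S_2$. Then it introduces a one-parameter family of ideals $J(t)$ with $J(0)_2=(I_\Gamma)_2$ and, for $t\neq 0$, $J(t)_2$ equal to $(\varphi_t(I_\Delta+(x_v^2)))_2$ for the coordinate change $\varphi_t(x_q)=x_p+tx_q$. A semicontinuity argument on the rank of a matrix with entries polynomial in $t$ gives $(J(t)+(\Theta,\omega))_2=S_2$ for generic $t$, and a final diagonal change of coordinates (using that $I_\Delta+(x_v^2)$ is monomial and that $\Theta$ is the colored s.o.p.) converts this into $(I_\Delta+(\Theta,\omega'))_2=S_2$. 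The key idea you are missing is this degeneration $t\to 0$ interpolating between $I_\Gamma$ and a twist of $I_\Delta$; it replaces your unfinished local computation by a one-line rank inequality.
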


\begin{proof}
Let $V=V(\Delta)$ be the vertex set of $\Delta$,
$S=\FF[x_v: v \in V]$, and let $\kappa$ be a proper $3$-coloring of $\Delta$. Let $\Theta=\theta_1,\theta_2,\theta_3$ be the colored s.o.p.\ for $\FF[\Delta]$, i.e.,
$\theta_i=\sum_{v \in V,\ \kappa(v)=i} x_v$ for $i=1,2,3$.
Let 
$s,t,w$ be the vertices with $\st_\Delta(p) \cap \st_\Delta(q)= \langle \{s,w\},\{w,t\}\rangle$,
and let
\begin{align*}
\Gamma =
\bcont(\Delta) \cup \langle \{ s,w,q\},\{t,w,q\} \rangle \cup \langle \{ s,w,p\},\{t,w,p\} \rangle.
\end{align*}
As $\kappa$ is also a proper coloring for $\Gamma$, $\Theta$ is also the colored s.o.p.\ for $\FF[\Gamma]$.
Since $\{s,q\},\{t,q\} \in \bcont(\Delta)$ and $w,p\notin \bcont(\Delta)$,
and since $\bcont(\Delta)$ has the colored SLP by the assumption, applying Lemma \ref{3.5} twice,
yields that there is a linear form $w$ such that $(\FF[\Gamma]/(\Theta,w)\FF[\Gamma])_2=0$. In other words,
\begin{align}
\label{3-3-1}
(I_\Gamma +(\Theta,w))_2= S_2,
\end{align}
where $(\Theta,w)$ is the ideal of $S$ generated by $\Theta$ and $w$.

Let $\mathcal G=\{x_u x_v: \{u,v\} \not \in \Delta\}$ and $\overline {\mathcal G}=\mathcal G \cup\{x_v^2:v \in V\}$.
Thus $\mathcal G$ is the set of degree $2$ generators of the Stanley--Reisner ideal $I_\Delta \subseteq S$.
Note that $x_px_q \in \mathcal G$.
For $m \in \overline{\mathcal G}$ and $t \in \FF$, we define
\begin{align}
\label{3-2}
\Phi_t(m)=
\begin{cases}
m \frac {x_p} {x_q} + t m, & \mbox{ if $x_q$ divides $m$ and $m \frac {x_p} {x_q} \not \in \overline{\mathcal G}$},\\
m, & \mbox{ otherwise},
\end{cases}
\end{align}
and define the ideal
$$J(t)=( \Phi_t(m): m \in \mathcal G) \subseteq S.$$
Also, for $t \in \FF\setminus \{0\}$,
let $\varphi_t$ be the change of coordinates of $S$ defined by $\varphi_t(x_v)=x_v$ for all $v \ne q$ and $\varphi_t(x_q)= x_p+t x_q$.

We show the following {\bf claims}:

\begin{enumerate}
\item[(a)] $I_\Delta +(x_v^2:v \in V)+(\Theta)=I_\Delta+(\Theta)$.
\item[(b)] $J(0)_2=(I_\Gamma)_2$.
\item[(c)] For $t \ne 0$, $\varphi_t(I_\Delta+(x_v^2:v\in V))_2= (J(t)+(x_v^2:v \in V))_2$.
\item[(d)]
For $t \not \in \{0,1\}$,
if
$(\varphi_t(I_\Delta+(x_v^2:v\in V))+(\Theta,w))_2=S_2$
for some linear form $w$, then there is a linear form $w'$ such that $(I_\Delta+(\Theta,w'))_2=S_2$.
\end{enumerate}

\begin{proof}[Proof of the claims]
Property (a) follows from \cite[III, Proposition 4.3]{Stan:Gr}.
Since the graph of $\Gamma$ is obtained from the graph of $\Delta$ by replacing an edge $\{p,v\} \in \Delta$ with $\{q,v\}$ whenever $\{q,v\} \not \in \Delta$
(see Figure 1),
the property (b) is straightforward by the definition of $\Phi_t$.

We prove (c).
Let $\mathcal H= \{ m \in \mathcal G: m \frac {x_p} {x_q} \in \overline {\mathcal G}\}$
and $\overline {\mathcal H} = \mathcal H \cup \{x_q^2\}$.
Note that $\Phi_t(m)=\varphi_t(m)$ for any $m \in \overline {\mathcal G} \setminus \overline {\mathcal H}$.
The $\FF$-vector space
$(J(t)+(x_v^2:v \in V))_2$ is spanned by
\begin{align}
\label{3-3}
\{ \Phi_t(m): m \in \overline {\mathcal G} \setminus \overline {\mathcal H}\} \cup \overline {\mathcal H}
\end{align}
and $\varphi_t(I_\Delta+(x_v^2:v\in V))_2$ is spanned by
\begin{align}
\label{3-4}
\textstyle
\varphi_t( \overline {\mathcal G} )=
\{ \varphi_t(m): m \in \overline {\mathcal G} \setminus \overline {\mathcal H}\} \cup
\{ m \frac {x_p} {x_q} + tm : m \in \mathcal H\} \cup \{x_p^2+2tx_px_q+t^2x_q^2\}.
\end{align}
Then, since $\varphi_t(\overline {\mathcal G})$ contains $m \frac {x_p} {x_q} $ for any $m \in \mathcal H$,
$\varphi_t(I_\Delta+(x_v^2:v \in V))$ contains $\mathcal H$.
Also, since $x_p^2,x_px_q,x_q^2 \in \overline{\mathcal G}$,
$\varphi_t( \overline {\mathcal G})$ contains $x_p^2,x_p^2 + tx_px_q$, and $x_p^2+2tx_px_q+t^2x_q^2$.
Thus $\varphi_t(I_\Delta+(x_v^2:v \in V))$ contains $x_p^2,x_px_q,x_q^2$.
Then \eqref{3-3} and \eqref{3-4} show the desired equation.

Finally, we prove (d).
We may assume that $\kappa(p)=\kappa(q)=1$.
Since $\varphi_t^{-1}(x_q)=\frac 1 t (x_q-x_p)$ and $\varphi_t^{-1}(x_v)=x_v$ for $v \ne q$,
by the assumption of (d),
\begin{align}
\label{3-5}
S_2= \varphi_t^{-1}(S_2)&=
\varphi_t^{-1} \big( \varphi_t(I_\Delta+(x_v^2:v \in V))+(\Theta,w)\big)_2\\
\nonumber
&=\big(I_\Delta+(x_v^2:v\in V)+(\varphi_t^{-1}(\theta_1),\theta_2,\theta_3,\varphi_t^{-1}(w))\big)_2.
\end{align}
Since
$\varphi_t^{-1}(\theta_1)= \frac 1 t x_q +(1- \frac 1 t) x_p + \sum_{\kappa(v) =1,\ v \ne p,q} x_v$
and since $I_\Delta+(x_v^2:v \in V)$ is a monomial ideal,
by applying the change of coordinates $\psi$ of $S$ which only changes $x_p$ to $(1-\frac 1 t)^{-1} x_p$ and $x_q$ to $t x_q$,
we infer from \eqref{3-5} that
$$\big(I_\Delta +(x_v^2: v \in V) + (\theta_1,\theta_2,\theta_3,\psi\circ\varphi_t^{-1}(w)) \big)_2=\psi(S_2)=S_2.$$
Then the desired equality follows from (a).
\end{proof}

We now go back to the proof of Lemma \ref{3.7}.
For any linear form $w$, we have
\begin{align}
\label{3-6}
\dim_\FF(J(0)+(\Theta,w))_2 \leq \dim_\FF(J(t)+(\Theta,w))_2
\end{align}
for a generic choice of $t \in \FF$.
Indeed, since $(J(t)+(\Theta,w))_2$ is spanned by
$$X=\{ \Phi_t(m) : m \in \mathcal G\} \cup \{x_v \theta_i:v\in V,\ i \in\{1,2,3\}\} \cup \{x_v w : v \in V\},$$
$\dim_\FF(J(t)+(\Theta,w))_2$ is equal to the rank of the $|X| \times (\dim_\FF S_2)$-matrix $M_t$,
whose entries are the coefficients of the degree $2$ monomials of the elements of $X$.
Since we may regard the entries of $M_t$ as polynomials in $t$, we have $\rank M_t \geq \rank M_0$ for a generic choice of $t \in \FF$.
(A generic choice of $t$ makes sense as the field $\FF$ is infinite
.)

Now, by \eqref{3-3-1}, there is a linear form $w$ such that
$$(J(0)+(\Theta,w))_2=(I_\Gamma+(\Theta,w))_2=S_2,$$
where we use claim (b) for the first equality.
Thus by \eqref{3-6}
$$\big(J(t)+(x_v^2:v \in V)+(\Theta,w)\big)_2 =S_2$$
for a generic $t \in \FF$.
Then by claim (c) we have
$$\big(\varphi_t(I_\Delta+(x_v^2:v \in V))+(\Theta,w) \big)_2=S_2,$$
and by claim (d) it follows that there is a linear form $w'$ such that
$$\big(I_\Delta+(\Theta,w')\big)_2= S_2.$$
This proves $(\FF[\Delta]/(\Theta,w') \FF[\Delta])_2=0$, as desired.
\end{proof}

We now prove Theorem \ref{thm:main1}.

\begin{proof}[Proof of Theorem \ref{thm:main1}]
We prove the statement by induction on the number of vertices.
Let $\Delta$ be a balanced simplicial $2$-sphere.
Then $\Delta$ has at least $6$ vertices, since there are at least two vertices in each color.
If $\Delta$ has exactly $6$ vertices, then $\Delta$ must be the boundary of a $3$-crosspolytope and hence
$$\FF[\Delta]/\Theta \FF[\Delta]\cong \FF[x,y,z]/(x^2,y^2,z^2),$$
which has the SLP if $\mathrm{char}(\FF)$ is not $2$ or $3$,
where $\Theta$ is the colored s.o.p.

Suppose that $\Delta$ has at least $7$ vertices.
By Theorem \ref{2.7}, either $\Delta=\Gamma \#_\sigma \Sigma$ for some balanced simplicial $2$-spheres $\Gamma$ and $\Sigma$, or there is a contractible pair $(p,q)$ in $\Delta$.
In the former case, since $\Gamma$ and $\Sigma$ have the colored SLP by the induction hypothesis,
$\Delta$ also has the colored SLP by Corollary \ref{3.3}.
In the latter case,
$\bcont(\Delta)$ has the colored SLP by the induction hypothesis,
and Lemma \ref{3.7} shows that $\Delta$ has the colored SLP.
\end{proof}

\section{$(2,1)$-balanced simplicial spheres}\label{sect:aBalanced}

In this section, we prove Theorem \ref{thm:main2}.
To simplify the argument,
we slightly modify some notation from the introduction.

Let $\Delta$ be a $2$-dimensional simplicial complex.
A {\em bi-coloring} of $\Delta$ is a map $\pi : V(\Delta) \to \{b,r\}$, where $b$ and $r$ are letters.
For a fixed bi-coloring $\pi$,
vertices $v$ with $\pi(v)=b$ (resp.\ $\pi(v)=r$) are called {\em blue vertices} (resp.\ {\em red vertices}).
A bi-coloring $\pi$ is said to be a {\em $(2,1)$-coloring} of $\Delta$ if every face $\sigma \in \Delta$ has at most two blue vertices and at most one red vertex.
Thus a $2$-dimensional simplicial complex is $(2,1)$-balanced if it has a $(2,1)$-coloring.

Given a fixed bi-coloring $\pi$ of $\Delta$,
a linear form $\theta=\sum_{v \in V(\Delta)} \alpha_v x_v \in \FF[\Delta]$ is said to be {\em blue} (resp.\ {\em red}) if $\alpha_v =0$ for all $v$ with $\pi (v) \ne b$ (resp.\ $\pi(v) \ne r$).
A {\em $(2,1)$-colored sequence} in $\FF[\Delta]$ (w.r.t.\ $\pi$) is a sequence of linear forms $\theta_1,\theta_2,\theta_3$ in $\FF[\Delta]$ such that $\theta_1,\theta_2$ are blue and $\theta_3$ is red.
If $\pi$ is a $(2,1)$-coloring of $\Delta$, then, by a result of Stanley \cite[Theorem 4.1]{Stan:79}, there is a $(2,1)$-colored sequence which is an l.s.o.p.\ for $\FF[\Delta]$.
We call such an l.s.o.p.\ a {\em $(2,1)$-colored s.o.p.} for $\FF[\Delta]$.

Recall from the previous section that, for a simplicial $2$-sphere $\Delta$ and an l.s.o.p.\ $\Theta$ for $\FF[\Delta]$, the algebra $\FF[\Delta]/\Theta \FF[\Delta]$ has the SLP if there is a linear form
$w$ such that
$$(\FF[\Delta]/(\Theta,w)\FF[\Delta])_2=0.$$
We denote by $e(\Delta)$ the number of edges of $\Delta$.
The next statement proves the implication ``(i)$\Rightarrow$(ii)'' of Theorem \ref{thm:main2}.

\begin{lemma}
\label{4.1}
Let $\Delta$ be a simplicial complex, $\pi$ a bi-coloring of $\Delta$, and $\Theta$ a $(2,1)$-colored sequence in $\FF[\Delta]$.
For any set $W$ of blue vertices of $\Delta$ with $|W| \geq 2$ and for any linear form $w$, we have
$$\dim_\FF(\FF[\Delta]/(\Theta,w)\FF[\Delta])_2 \geq e(\Delta_W) -2|W|+3.$$
\end{lemma}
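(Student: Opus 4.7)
The strategy is to project $\FF[\Delta]$ onto the Stanley--Reisner ring of the induced subcomplex $\Delta_W$, exploiting the fact that the red parameter $\theta_3$ is automatically annihilated (since $W$ contains no red vertex). Since every face of $\Delta$ has at most two blue vertices and $W$ is entirely blue, the complex $\Delta_W$ is at most one-dimensional, i.e.\ a graph on $W$. The linear map $x_v\mapsto x_v$ for $v\in W$ and $x_v\mapsto 0$ otherwise extends to a surjective ring homomorphism $\varphi\colon \FF[\Delta]\twoheadrightarrow \FF[\Delta_W]$, as each generator of $I_\Delta$ is sent either to $0$ or to a generator of $I_{\Delta_W}$. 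Under $\varphi$, the blue forms $\theta_1,\theta_2$ descend to blue linear forms $\theta_1',\theta_2'\in \FF[\Delta_W]$, the red form $\theta_3$ maps to $0$, and $w$ maps to some linear form $w'$. Consequently $\varphi$ induces a surjection
$$\FF[\Delta]/(\Theta,w)\FF[\Delta]\;\twoheadrightarrow\;\FF[\Delta_W]/(\theta_1',\theta_2',w')\FF[\Delta_W],$$
and it suffices to bound the degree-$2$ dimension of the right-hand side from below.

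The Hilbert function of $\FF[\Delta_W]$ in degree $2$ is immediate: the space is spanned by the $|W|$ squares $x_v^2$ together with the $e(\Delta_W)$ products $x_ux_v$ ranging over edges, so $\dim_\FF \FF[\Delta_W]_2=|W|+e(\Delta_W)$. To upper bound the ideal $J:=(\theta_1',\theta_2',w')\FF[\Delta_W]$ in degree $2$, I would invoke the standard Koszul estimate: in a polynomial ring on $n\geq 2$ variables, any ideal generated by three linear forms satisfies $\dim_\FF (\ell_1,\ell_2,\ell_3)_2\leq 3n-3$. In the linearly independent case (requiring $n\geq 3$), this follows by a coordinate change putting $(\ell_1,\ell_2,\ell_3)$ into $(x_1,x_2,x_3)$ and counting the degree-$2$ monomials divisible by some $x_i$ with $i\leq 3$; the linearly dependent case reduces to one or two effective generators, and one checks the elementary inequalities $n,\, 2n-1\leq 3n-3$ for $n\geq 2$. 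This upper bound only weakens after passing to the quotient by $I_{\Delta_W}$.

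Combining these two ingredients gives
$$\dim_\FF(\FF[\Delta]/(\Theta,w)\FF[\Delta])_2 \;\geq\; (|W|+e(\Delta_W)) - (3|W|-3) \;=\; e(\Delta_W)-2|W|+3,$$
as required. The conceptual point is that restricting to the blue induced subcomplex automatically kills $\theta_3$, so one effectively quotients by only three linear forms rather than four; this is precisely what yields the coefficient $-2$ rather than $-3$ on $|W|$. I do not anticipate any real obstacle beyond the minor bookkeeping of degenerate cases in the Koszul bound, and note that the inequality is vacuous for $|W|\leq 3$ (where the right-hand side is non-positive), so the meaningful range is $|W|\geq 4$.
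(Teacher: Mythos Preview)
Your proof is correct and follows essentially the same route as the paper's: project onto $\FF[\Delta_W]$, observe that $\theta_3$ dies, compute $\dim_\FF \FF[\Delta_W]_2 = |W| + e(\Delta_W)$, and bound the degree-$2$ piece of the ideal generated by three linear forms by $3|W|-3$ (the paper phrases this last step as inclusion--exclusion on the products $\theta_1\theta_2,\theta_1 w,\theta_2 w$, which is your Koszul count in different clothing). One minor correction: the hypothesis is only that $\pi$ is a bi-coloring, not a $(2,1)$-coloring, so your claim that ``every face of $\Delta$ has at most two blue vertices'' is unwarranted and $\Delta_W$ need not be one-dimensional---indeed the paper later applies this lemma in the semi-proper setting where all-blue triangles can occur---but your degree-$2$ Hilbert-function computation only sees vertices and edges anyway, so the argument goes through unchanged once you drop that sentence.
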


\begin{proof}
The surjection $\FF[\Delta] \to \FF[\Delta_W]$ induces a surjection
$$\FF[\Delta]/(\Theta,w) \FF[\Delta] \to \FF[\Delta_W]/(\Theta,w)\FF[\Delta_W].$$
Since $\Delta_W$ has no red vertices, $\theta_3$ is zero in $\FF[\Delta_W]$
and
$$\FF[\Delta_W]/(\Theta,w) \FF[\Delta_W]=\FF[\Delta_W]/(\theta_1,\theta_2,w)\FF[\Delta_W].$$
Then,
since $\dim_\FF \FF[\Delta_W]_2=e(\Delta_W)+|W|$ and $\dim_\FF \FF[\Delta_W]_1=|W|$,
it follows that
\begin{align*}
\dim_\FF (\FF[\Delta]/(\Theta,w) \FF[\Delta])_2
&=
\dim_\FF (\FF[\Delta_W]/(\theta_1,\theta_2,w) \FF[\Delta_W])_2\\
& \geq e(\Delta_W) +|W| -(3|W|-3)\\
& = e(\Delta_W)-2|W|+3,
\end{align*}
as desired.
(The ``$-3$'' term above comes from the fact that each of $\theta_1\theta_2, \theta_1w, \theta_2w \in \FF[\Delta_W]_2$ is in at least two of the ideals $x\FF[\Delta_W]$, where $x\in\{\theta_1,\theta_2,w\}$.)
\end{proof}

\begin{example}
\label{4.2}
From Lemma \ref{4.1},
we can produce $(2,1)$-balanced $2$-spheres such that $\FF[\Delta]/\Theta \FF[\Delta]$ fails to have the SLP for any $(2,1)$-colored s.o.p.\ $\Theta$ for $\FF[\Delta]$.

Let $\Gamma$ be a simplicial $2$-sphere with $n$ vertices, and let $\Delta$ be the simplicial $2$-sphere obtained from $\Gamma$ by subdividing all facets of $\Gamma$.
Then $\Delta$ is $(2,1)$-balanced and has a unique $(2,1)$-coloring $\pi$, which is defined by $\pi(v)=b$ if $v$ is a vertex of $\Gamma$ and $\pi(v)=r$ otherwise.
Figure \ref{Fig:CounterExample} below shows the graph of $\Delta$ if $\Gamma$ is the boundary of a simplex.

\begin{center}
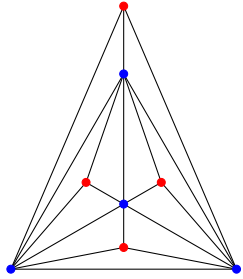
\begin{figure}[h]\label{Fig:CounterExample}
\begin{tikzpicture}
\tikzstyle{point}=[ fill=blue,draw=blue,circle, inner sep=0pt,minimum width=3pt,minimum height=3pt]
\draw (0,0) to (3,0) to (1.5,2.598) to (0,0) to (1.5,0.866) to (1.5,2.598);
\draw (1.5,0.866) to (3,0);
\draw (0,0) to (1.5,0.2887) to (3,0);
\draw (1.5,0.2887) to (1.5,0.866);
\draw (0,0) to (1,1.1547) to (1.5,2.598);
\draw (1,1.1547) to (1.5,0.866);
\draw (3,0) to (2,1.1547) to (1.5,0.866);
\draw (2,1.1547) to (1.5,2.598);
\draw (0,0) to (1.5,3.5) to (3,0);
\draw (1.5,3.5) to (1.5,2.598);
\node [point, label={[label distance=0cm]180:}] at (0,0){};
\node [point, label={[label distance=0cm]0:}] at (3,0){};
\node [point, label={[label distance=0cm]90:}] at (1.5,2.598){};
\node [point, label={[label distance=0cm]90:}] at (1.5,0.866){};
\tikzstyle{point}=[ fill=red,draw=red,circle, inner sep=0pt,minimum width=3pt,minimum height=3pt]
\node [point, label={[label distance=0cm]90:}] at (1.5,0.2887){};
\node [point, label={[label distance=0cm]90:}] at (1,1.1547){};
\node [point, label={[label distance=0cm]90:}] at (2,1.1547){};
\node [point, label={[label distance=0cm]90:}] at (1.5,3.5){};
\end{tikzpicture}
\caption{The graph of the $(2,1)$-balanced sphere constructed from a tetrahedron.}
\end{figure}
\end{center}

Let $W$ be the set of all blue vertices of $\Delta$.
Then $\Delta_W$ is the graph of $\Gamma$, so $|W|=n$ and $e(\Delta_W)=3n-6$.
Hence Lemma \ref{4.1} says that
$$\dim_\FF (\FF[\Delta]/(\Theta,w) \FF[\Delta])_2 \geq 3n-6-(2n-3) =n-3$$
for any $(2,1)$-colored s.o.p.\ $\Theta$ for $\FF[\Delta]$ and any linear form $w$.
As $n>3$,
$\FF[\Delta]/\Theta \FF[\Delta]$ fails to have the SLP for any $(2,1)$-colored s.o.p.\ $\Theta$ for $\FF[\Delta]$.
\end{example}
For any simplicial $2$-sphere $\Delta$,
by the Hard Lefschetz theorem, $\FF[\Delta]/\Theta \FF[\Delta]$ has the SLP for a generic l.s.o.p.\ $\Theta$ for $\FF[\Delta]$.
However, the previous example shows that,
for a specific choice of a simplicial $2$-sphere $\Delta$ and a specific l.s.o.p.\ $\Theta$, the dimension
$\dim_\FF (\FF[\Delta]/(\Theta,w) \FF[\Delta])_2$,
where $w$ is a generic linear form,
can be arbitrarily big.

In the rest of this section, we prove the implication ``(ii)$\Rightarrow$(i)'' of Theorem \ref{thm:main2}.
We actually consider a more general class of simplicial spheres that properly contains $(2,1)$-balanced simplicial $2$-spheres.
We say that a bi-coloring $\pi$ of a simplicial complex $\Delta$ is {\em semi-proper} if there are no edges $\{u,v\} \in \Delta$ with $\pi(u)=\pi(v)=r$.
Note that any $(2,1)$-coloring is semi-proper, but the converse is false since a semi-proper bi-coloring does not forbid the existence of a $2$-face all of whose vertices are blue.
By the Kind--Kleinschmidt's criterion on linear systems of parameters for Stanley--Reisner rings \cite[III, Lemma 2.4]{Stan:Gr},
we get the following lemma.

\begin{lemma}
\label{4.3}
Let $\Delta$ be a $2$-dimensional simplical complex, and let $\pi$ be a semi-proper bi-coloring of $\Delta$.
Then, for a generic choice of blue linear forms $\theta_1,\theta_2$ and for a generic linear form $\theta_3$,
the sequence $\theta_1,\theta_2,\theta_3$ is a system of parameters for $\FF[\Delta]$.
\end{lemma}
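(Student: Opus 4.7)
The plan is to invoke the Kind--Kleinschmidt criterion quoted immediately before the lemma, and then to verify its hypothesis generically by exhibiting an explicit witness for each face of $\Delta$.

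First, I would recall that Kind--Kleinschmidt asserts that a triple $\theta_1,\theta_2,\theta_3 \in \FF[\Delta]_1$ is an l.s.o.p.\ for $\FF[\Delta]$ if and only if, for every face $\sigma \in \Delta$, the $3 \times |\sigma|$ coefficient matrix of the restrictions $\theta_i|_\sigma$ (obtained by setting $x_v=0$ for $v \notin \sigma$) has rank $|\sigma|$. Let $B \subseteq V(\Delta)$ denote the set of blue vertices and let $P = \FF^B \times \FF^B \times \FF^{V(\Delta)}$ be the parameter space of admissible triples; this is an irreducible affine variety, so the notion of a generic point is well defined. For each face $\sigma$ the full-rank condition cuts out a Zariski-open subset $U_\sigma \subseteq P$, and since $\Delta$ has finitely many faces, the assertion reduces to showing that each $U_\sigma$ is nonempty: then $\bigcap_\sigma U_\sigma$ is a nonempty Zariski-open subset of $P$, and any triple in it is an l.s.o.p.

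Second, I would use the semi-proper hypothesis to bound colors on faces: a face containing two red vertices would contain a red-red edge, contradicting semi-properness. Hence every face $\sigma$ of $\Delta$ contains at most one red vertex, and therefore at least $|\sigma|-1$ blue vertices.

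Third, for each face $\sigma$ I would produce an explicit witness. If $|\sigma| \le 1$ the check is immediate (take $\theta_1 = x_v$ if $\sigma=\{v\}$ is blue, or $\theta_3=x_v$ with arbitrary blue $\theta_1,\theta_2$ if $\sigma = \{v\}$ is red). If $|\sigma| = 2$, let $u$ be a blue vertex of $\sigma$ and $v$ the other vertex: taking $\theta_1 = x_u$, $\theta_2 = 0$, and $\theta_3 = x_v$ (all admissible) yields the restriction rows $(1,0),(0,0),(0,1)$, hence rank $2$. If $|\sigma| = 3$, semi-properness forces two blue vertices, so write $\sigma = \{u_1,u_2,w\}$ with $u_1,u_2$ blue, and take $\theta_1 = x_{u_1}$, $\theta_2 = x_{u_2}$, $\theta_3 = x_w$; the restriction is the $3\times 3$ identity, so rank $3$. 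In every case $U_\sigma$ is nonempty.

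The argument is essentially bookkeeping. The only substantive use of the semi-proper hypothesis is at the $|\sigma|=3$ step, where it guarantees two blue vertices in $\sigma$ so that the pair $(\theta_1,\theta_2)$ of blue forms already has rank $2$ on $\sigma$; without this, no triple with $\theta_1,\theta_2$ blue could achieve rank $3$ on a facet containing two red vertices. Beyond this observation I do not foresee any obstacle.
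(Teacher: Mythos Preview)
Your proposal is correct and follows essentially the same approach as the paper: both invoke the Kind--Kleinschmidt criterion and use the semi-proper hypothesis to ensure each face contains at most one red vertex, so that the two blue forms $\theta_1,\theta_2$ can contribute rank $|\sigma|-1$ on every face. The paper's proof is terser, simply asserting that a generic choice under the color constraint satisfies the criterion, whereas you make the genericity argument explicit by exhibiting a witness for each face and intersecting the resulting Zariski-open sets; but the substance is the same.
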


\begin{proof}
Let $\Theta=\theta_1,\theta_2,\theta_3 \in \FF[\Delta]$ be a sequence of linear forms with $\theta_i=\sum_{v \in V(\Delta)} \alpha_{i,v} x_v$.
The Kind--Kleinschmidt's criterion says that if, for any face $\sigma \in \Delta$,
the matrix $(\alpha_{i,v})_{1 \leq i \leq 3,\ v \in \sigma}$ has rank $|\sigma|$, then $\Theta$ is an l.s.o.p.\ for $\FF[\Delta]$.
Since each face has at most one red vertex, if we choose $\Theta$ generically under the restriction that $\alpha_{i,v}=0$ when $\pi(v)=r$ and $i \in \{1,2\}$,
then the Kind--Kleinschmidt's criterion shows that $\theta_1,\theta_2,\theta_3$ is an l.s.o.p.\ for $\FF[\Delta]$.
\end{proof}


Next, we prove analogues of Corollary \ref{3.3} and Lemma \ref{3.7} for the semi-proper setup.
Let $\Delta$ be a simplicial $2$-sphere with a semi-proper bi-coloring $\pi$.
We say that $\Delta$ has the {\em $\pi$-colored SLP} (over $\FF$) if there are a $(2,1)$-colored sequence $\Theta=\theta_1,\theta_2,\theta_3$ in $\FF[\Delta]$ and a linear form $w$ such that
$$(\FF[\Delta]/(\Theta,w) \FF[\Delta])_2 =0.$$
Note that if $\Delta$ has the $\pi$-colored SLP, then $(\FF[\Delta]/(\Theta,w) \FF[\Delta])_2=0$ for a generic choice of a $(2,1)$-colored sequence $\Theta$ and a generic linear form $w$.
In particular, if $\pi$ is a $(2,1)$-coloring, then $\Theta$ can be taken as an l.s.o.p.\ for $\FF[\Delta]$.

\begin{lemma}
\label{4.4}
Let $\Delta= \Gamma_1 \#_\sigma \Gamma_2$ be a simplicial $2$-sphere with a semi-proper bi-coloring $\pi$.
If both $\Gamma_1$ and $\Gamma_2$ have the $\pi$-colored SLP, then so does $\Delta$.
\end{lemma}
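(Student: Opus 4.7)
The plan is to adapt the proof of Corollary 3.3 to the semi-proper setting by applying Lemma 3.2 (Babson--Nevo) together with a genericity argument. Concretely, I aim to exhibit a single pair $(\Theta, w)$, where $\Theta$ is a $(2,1)$-colored sequence and $w$ a linear form in $\FF[x_v:v\in V(\Delta)]$, such that (a) $\Theta$ is a common l.s.o.p.\ for $\FF[\Delta]$ and $\FF[\langle \sigma \rangle]$, and (b) $(\FF[\Gamma_i]/(\Theta,w)\FF[\Gamma_i])_2 = 0$ for both $i=1,2$. Lemma 3.2 will then immediately yield $(\FF[\Delta]/(\Theta,w)\FF[\Delta])_2 = 0$, which is the $\pi$-colored SLP of $\Delta$.

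Each of (a) and (b) cuts out a Zariski-open subset of the affine parameter space $\mathcal P$ of $(2,1)$-colored pairs $(\Theta,w)$ in $\FF[\Delta]$. For (a), a refinement of Lemma 4.3 (taking into account the constraint that $\theta_3$ is red) yields a non-empty open locus $U_\Delta \subseteq \mathcal P$ of $\Theta$'s that are l.s.o.p.s for $\FF[\Delta]$: here one uses that $\pi$ is semi-proper, so every $2$-face contains at most one red vertex, and that the Kind--Kleinschmidt criterion then holds generically. A similar argument — reduced to the non-vanishing of a single $3\times 3$ coefficient determinant — gives a non-empty open l.s.o.p.\ locus $U_\sigma$ for $\FF[\langle \sigma \rangle]$. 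For (b), the $\pi$-colored SLP hypothesis on $\Gamma_i$ (with the restricted bi-coloring $\pi_i$) furnishes a witness pair in $\FF[\Gamma_i]$, which I lift to a point of $\mathcal P$ by assigning arbitrary coefficients to the variables indexed by $V(\Delta) \setminus V(\Gamma_i)$. Since condition (b) is the surjectivity of the multiplication-by-$w$ map from $(\FF[\Gamma_i]/\Theta\FF[\Gamma_i])_1$ to $(\FF[\Gamma_i]/\Theta\FF[\Gamma_i])_2$, whose source and target dimensions are constant once $\Theta$ lies in $U_\Delta$ (also an l.s.o.p.\ for $\FF[\Gamma_i]$ by Lemma 3.1), the locus $V_i \subseteq U_\Delta$ where (b) holds is Zariski-open and non-empty.

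Since $\FF$ is infinite, the intersection $U_\Delta \cap U_\sigma \cap V_1 \cap V_2$ is non-empty, and any pair $(\Theta, w)$ in it verifies all the hypotheses of Lemma 3.2, completing the proof. The main obstacle is the openness of condition (b) on the l.s.o.p.\ locus — i.e., constancy of the relevant Hilbert function together with the semicontinuity of rank of the multiplication-by-$w$ map viewed as a polynomial family indexed by $(\Theta, w) \in \mathcal P$. A secondary subtlety is the potential edge case where $\sigma$ has no red vertex: this cannot arise for a $(2,1)$-coloring (the setting needed for Theorem \ref{thm:main2}, where every $2$-face carries exactly two blue and one red vertex), but could in a general semi-proper bi-coloring, in which case Lemma 3.2 does not apply directly and a Mayer--Vietoris-style substitute using the exact sequence $0 \to \FF[\Delta] \to \FF[\Gamma_1] \oplus \FF[\Gamma_2] \to \FF[\langle \sigma \rangle] \to 0$ of graded modules would be needed.
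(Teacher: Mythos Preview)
Your plan has a genuine gap at the very first step. The ``refinement of Lemma~\ref{4.3}'' you invoke --- that a generic $(2,1)$-colored sequence $\Theta=(\theta_1,\theta_2,\theta_3)$ with $\theta_3$ red is an l.s.o.p.\ for $\FF[\Delta]$ --- is false in the semi-proper setting. If $\Delta$ contains a facet $\tau$ all of whose vertices are blue, then the red form $\theta_3$ restricts to zero on $\tau$, so the $3\times 3$ Kind--Kleinschmidt matrix at $\tau$ has a zero row and is singular for \emph{every} red $\theta_3$; hence your locus $U_\Delta$ is empty. Your justification (``every $2$-face contains at most one red vertex'') is the wrong inequality: what is needed for the criterion is at \emph{least} one red vertex per facet, and semi-proper bi-colorings do not forbid all-blue facets. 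This is not a peripheral edge case: blue facets are exactly what distinguish the semi-proper setup from the genuine $(2,1)$-balanced one, and Case~(ii) of the proof of Theorem~\ref{4-6} invokes Lemma~\ref{4.4} precisely when blue facets are present. The same obstruction kills $U_\sigma$ whenever the missing triangle $\sigma$ is all blue --- a case you flag as a ``secondary subtlety'' --- but your proposed Mayer--Vietoris patch for that issue does nothing for the more basic failure of $U_\Delta$.

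The paper sidesteps all of this with a one-line trick: since the ideal $(\theta_1,\theta_2,\theta_3,w)$ does not care about the order of its generators, apply Lemma~\ref{3.2} with $(\theta_1,\theta_2,w)$ in the role of the l.s.o.p.\ and $\theta_3$ in the role of the extra linear form. Now $w$ is an unconstrained generic linear form, so Lemma~\ref{4.3} applies verbatim --- no refinement needed --- and gives that $(\theta_1,\theta_2,w)$ is a common l.s.o.p.\ for $\FF[\Delta]$ and $\FF[\langle\sigma\rangle]$, blue facets or not. The hypothesis $(\FF[\Gamma_i]/(\theta_1,\theta_2,w,\theta_3)\FF[\Gamma_i])_2=0$ of Lemma~\ref{3.2} is then exactly the $\pi$-colored SLP of $\Gamma_i$ at generic data, and the conclusion is the $\pi$-colored SLP of $\Delta$.
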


\begin{proof}
Let $S=\FF[x_v: v \in V(\Delta)]$.
If we choose a $(2,1)$-colored sequence $\theta_1,\theta_2,\theta_3 \in S$ and a linear form $w \in S$ generically,
then $\theta_1,\theta_2,w$ is a common system of parameters for $\FF[\Delta]$ and $\FF[\langle \sigma \rangle]$ by Lemma \ref{4.3}.
Also, $(\FF[\Gamma_i]/(\Theta,w)\FF[\Gamma_i])_2=0$ for $i \in \{1,2\}$ by the assumption.
Then the assertion follows from Lemma \ref{3.2}.
\end{proof}

\begin{lemma}
\label{4.5}
Let $\Delta$ be a simplicial $2$-sphere with a semi-proper bi-coloring $\pi$, and let $\{p,q\}\in \Delta$ with $\pi(p)=\pi(q)=b$. Assume that $\st_\Delta(p) \cap \st_\Delta(q)$ is an induced subcomplex of $\Delta$ consisting of two triangles $\langle \{s,p,q\},\{t,p,q\} \rangle$ and that $\pi(s)=r$.
Then
$\Delta \to \cont (\Delta)$ is an admissible contraction
and if $\cont(\Delta)$ has the $\pi$-colored SLP, then $\Delta$ has the $\pi$-colored SLP.
\end{lemma}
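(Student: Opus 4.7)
The admissibility of $\Delta\to\cont(\Delta)$ is immediate, since the hypothesis $\st_\Delta(p)\cap\st_\Delta(q)=\langle\{s,p,q\},\{t,p,q\}\rangle$ is exactly the admissibility criterion recalled before Lemma \ref{admissiblecontraction}; Lemma \ref{admissiblecontraction} then gives that $\cont(\Delta)$ is a simplicial $2$-sphere, and because $\pi(p)=\pi(q)=b$, the induced bi-coloring on $\cont(\Delta)$ is again semi-proper.

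For the SLP implication, my plan is to mimic the degeneration argument of Lemma \ref{3.7}. Set $V=V(\Delta)$, $S=\FF[x_v:v\in V]$, and introduce the bridge complex
$$\Gamma=\cont(\Delta)\cup\langle\{s,p,q\},\{t,p,q\}\rangle$$
on vertex set $V$, which remains semi-proper under $\pi$. Since $\{s,q\},\{t,q\}\in\cont(\Delta)$ and $p$ is the ``new'' vertex, Lemma \ref{3.5} applies with $\cont(\Delta)$ in place of $\Delta$ and $\Sigma=\langle\{s,p,q\},\{t,p,q\}\rangle$. Choosing a generic $(2,1)$-colored sequence $\Theta$ (which, by Lemma \ref{4.3}, is simultaneously an l.s.o.p.\ for $\FF[\cont(\Delta)]$, $\FF[\Gamma]$ and $\FF[\Delta]$) and a generic linear form $L$, the $\pi$-colored SLP of $\cont(\Delta)$ together with the nonvanishing of $L$ in $\FF[\Sigma]/\Theta\FF[\Sigma]$ yields
$$(I_\Gamma+(\Theta,L))_2=S_2.$$

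Next, I reuse the change of coordinates $\varphi_t(x_q)=x_p+tx_q$, $\varphi_t(x_v)=x_v$ for $v\ne q$, and the family of ideals $J(t)=(\Phi_t(m):m\in\mathcal G)$ from the proof of Lemma \ref{3.7}, where $\mathcal G$ denotes the degree-$2$ generators of $I_\Delta$. Two combinatorial facts replace claims (b) and (c) of that proof. First, $J(0)_2=(I_\Gamma)_2$: here the induced-subcomplex hypothesis on $\st_\Delta(p)\cap\st_\Delta(q)$ is crucial, as it forces the only vertices of $\Delta$ adjacent to both $p$ and $q$ to be $s$ and $t$, which is exactly what makes the two descriptions match case-by-case. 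Second, $J(t)_2=\varphi_t(I_\Delta)_2$ for every $t\ne 0$: this is a direct computation on generators, using $\varphi_t(x_ux_q)-\varphi_t(x_ux_p)=tx_ux_q$ to recover $x_ux_q$ from $\varphi_t(I_\Delta)$ whenever $\{u,p\}\notin\Delta$. Combining the first fact with the previous step gives $(J(0)+(\Theta,L))_2=S_2$, and since the relevant coefficient matrix has polynomial entries in $t$ and full rank at $t=0$, it has full rank for all but finitely many $t\in\FF$. Fixing a generic $t\ne 0$, the second fact converts this into $(\varphi_t(I_\Delta)+(\Theta,L))_2=S_2$, and applying $\varphi_t^{-1}$ produces
$$\bigl(I_\Delta+(\varphi_t^{-1}(\Theta),\varphi_t^{-1}(L))\bigr)_2=S_2.$$
Since $p,q$ are both blue, $\varphi_t^{-1}$ preserves the $(2,1)$-colored property, and for generic $t$ the sequence $\varphi_t^{-1}(\Theta)$ is again an l.s.o.p.\ for $\FF[\Delta]$ by Lemma \ref{4.3}; this is the required $\pi$-colored SLP witness for $\Delta$.

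The main obstacle I expect is the second combinatorial fact, $J(t)_2=\varphi_t(I_\Delta)_2$ for $t\ne 0$. It is precisely what allows the argument to avoid the square-absorption claim (a) of Lemma \ref{3.7}, which relied on the balanced colored s.o.p.\ to guarantee $x_v^2\in I_\Delta+(\Theta)$ for every $v$. Because $\pi(p)=\pi(q)=b$ and $\{p,q\}\in\Delta$, blue-blue edges really are present in our setting, so claim (a) is not available; it is therefore essential that the degree-$2$ degeneration already matches $\varphi_t(I_\Delta)$ exactly, with no need to augment by squares of variables.
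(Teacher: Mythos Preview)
Your overall strategy coincides with the paper's: form the bridge complex $\Gamma=\cont(\Delta)\cup\Sigma$, apply Lemma~\ref{3.5} to pass the vanishing from $\cont(\Delta)$ to $\Gamma$, and then run the $J(t)$ degeneration. Your observation that $\{p,q\}\in\Delta$ (so $x_px_q\notin\mathcal G$) is exactly what makes $J(t)_2=\varphi_t(I_\Delta)_2$ hold without adjoining squares, and this is precisely the simplification the paper notes relative to Lemma~\ref{3.7}.

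There is, however, a genuine gap in your application of Lemma~\ref{3.5}. You assert that a generic $(2,1)$-colored sequence $\Theta$ is an l.s.o.p.\ for $\FF[\Gamma]$ ``by Lemma~\ref{4.3}'', but Lemma~\ref{4.3} only says this when $\theta_3$ is a generic \emph{arbitrary} linear form, not a generic \emph{red} one. In the semi-proper setting $\Gamma$ may contain a blue facet (for instance $\{t,p,q\}$ when $\pi(t)=b$, or a blue facet inherited from $\cont(\Delta)$), and then the Kind--Kleinschmidt criterion fails for every $(2,1)$-colored sequence, so Lemma~\ref{3.5} cannot be invoked with your choice of $\Theta$. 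The paper handles this by \emph{swapping the roles} of $\theta_3$ and the generic form when applying Lemma~\ref{3.5}: it takes $(\theta_1,\theta_2,L)$ as the l.s.o.p.\ for $\FF[\Gamma]$ (which Lemma~\ref{4.3} \emph{does} provide, since $L$ is generic) and lets $\theta_3$ play the role of the extra linear form. The hypothesis $\pi(s)=r$ then enters to guarantee that $\theta_3$ is nonzero in $\FF[\Sigma]/(\theta_1,\theta_2,L)\FF[\Sigma]$. Your write-up never actually uses $\pi(s)=r$ anywhere, which is a tell that this step is misconfigured: with your assignment of roles, the ``nonvanishing of $L$'' is automatic by genericity and does not need $s$ to be red, yet the conclusion would fail without a red vertex in $\Sigma$ because then $\theta_3|_\Sigma=0$ and $(\FF[\Sigma]/(\Theta,L))_1$ stays one-dimensional.

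A minor related point: your closing claim that $\varphi_t^{-1}(\Theta)$ is an l.s.o.p.\ for $\FF[\Delta]$ ``by Lemma~\ref{4.3}'' is again unsupported for the same reason, and in any case unnecessary---the definition of the $\pi$-colored SLP only asks for a $(2,1)$-colored sequence, not an l.s.o.p.
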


\begin{proof}
The proof is similar to that of Lemma \ref{3.7}.
That the contraction $\Delta \to \cont (\Delta)$ is admissible is obvious.
Let $S=\FF[x_v : v \in V(\Delta)]$.
Let
\begin{align}
\label{4-1}
\Sigma:= \st_\Delta(p) \cap \st_\Delta(q)= \langle \{s,p,q\},\{t,p,q\} \rangle,
\end{align}
and let
$$\Gamma := \cont(\Delta) \cup \Sigma.$$
Note that $\pi$ gives a semi-proper bi-coloring of $\Gamma$.
For a generic choice of blue linear forms $\theta_1,\theta_2$, of a red linear form $\theta_3$ and of a linear form $w \in S$,
the sequence $\theta_1,\theta_2,w$ is an l.s.o.p.\ for $\FF[\Gamma]$ by Lemma \ref{4.3}.
Moreover, $\theta_3$ is non-zero in $\FF[\Sigma]/(\theta_1,\theta_2,w)\FF[\Sigma]$,
since otherwise either $w$ is zero in
$\FF[\Sigma]/(\theta_1,\theta_2,\theta_3)\FF[\Sigma]$
or $\theta_3$ is zero in $\FF[\Sigma]/(\theta_1,\theta_2)\FF[\Sigma]$; none of
which can happen as $\Sigma$ has a red vertex.

Then, by Lemma \ref{3.5} and the assumption that $\cont(\Delta)$ has the $\pi$-colored SLP, we have
\begin{align}
\label{4-2}
\big(S/(I_\Gamma+(\Theta,w))\big)_2= \big(\FF[\Gamma]/(\Theta,w)\FF[\Gamma]\big)_2=0,
\end{align}
where $\Theta=\theta_1,\theta_2,\theta_3$.

Let $\mathcal G= \{ x_ux_v: \{u,v\} \not \in \Delta\}$.
For $m \in \mathcal G$ and $t \in \FF$,
we define $\Phi_t(m)$ in the same way as in \eqref{3-2}.
Also, for $t \in \FF \setminus \{0\}$, let $\varphi_t$ be the change of coordinates of $S$
defined by
$\varphi_t(x_v)=x_v$ for $v \ne q$ and $\varphi_t(x_q)=x_p+tx_q$.
Let $J(t)=(\Phi_t(m): m \in \mathcal G)$.
Then it is not hard to prove that
\begin{enumerate}
\item[(a)] $J(0)_2=(I_\Gamma)_2$, and
\item[(b)] $(\varphi_t(I_\Delta))_2=(J(t))_2$ for $t \ne 0$.
\end{enumerate}
Indeed, (a) easily follows from \eqref{4-1}, and (b) follows from a similar (and simpler) argument as claim (c) in the proof of Lemma \ref{3.7}.

Now, using (b), for a generic $t \in \FF$, we have
$$\dim_\FF(J(0)+(\Theta,w))_2\leq \dim_\FF (J(t) + (\Theta,w))_2 = \dim_\FF(I_\Delta+(\varphi_t^{-1}(\Theta), \varphi_t^{-1}(w)))_2.$$
Since $(J(0)+(\Theta,w))_2=(I_\Gamma+(\Theta,w))_2=S_2$ by (a) and \eqref{4-2},
the above inequality shows
$$\big(S/\big(I_\Delta+(\varphi_t^{-1}(\Theta),\varphi_t^{-1}(w)) \big)\big)_2=0.$$
Since $(\varphi_t^{-1}(\theta_1), \varphi_t^{-1}(\theta_2),\varphi_t^{-1}(\theta_3))$ is a $(2,1)$-colored sequence,
the above equation proves that $\Delta$ has the $\pi$-colored SLP.
\end{proof}

The following theorem completes the proof of the remaining part of Theorem \ref{thm:main2}.

\begin{theorem}
\label{4-6}
Let $\Delta$ be a simplicial $2$-sphere with a semi-proper bi-coloring $\pi$ that satisfies the following property (L):\medskip

(L) $e(\Delta_W) \leq 2 |W|-3$ for any set $W$ of blue vertices with $|W| \geq 2$.
\medskip

\noindent
Then $\Delta$ has the $\pi$-colored SLP.
\end{theorem}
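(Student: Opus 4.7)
\medskip

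My plan is to prove Theorem~\ref{4-6} by induction on $|V(\Delta)|$. For the base case, I take $\Delta$ to be small enough that the $\pi$-colored SLP can be verified by a direct computation of the Artinian reduction $\FF[\Delta]/(\Theta,w)$; representative small spheres include the boundary of the tetrahedron with a single red vertex or the boundary of the $3$-crosspolytope with the standard $(2,1)$-coloring, for which Hilbert-function arguments suffice.

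For the inductive step, suppose first that $\Delta$ has a missing triangle $\sigma$. Lemma~\ref{missingface} yields a connected-sum decomposition $\Delta=\Gamma_1\#_\sigma \Gamma_2$ in which both $\Gamma_1$ and $\Gamma_2$ are simplicial $2$-spheres with strictly fewer vertices than $\Delta$. Each $\Gamma_i$ inherits the bi-coloring $\pi$ (which remains semi-proper, since no red--red edges can be created by restriction), and each inherits condition~(L): for any blue set $W\subseteq V(\Gamma_i)$, the induced subcomplexes $(\Gamma_i)_W$ and $\Delta_W$ have the same edge sets, so the bound $e((\Gamma_i)_W)\le 2|W|-3$ transfers verbatim. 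Applying the induction hypothesis to $\Gamma_1$ and $\Gamma_2$ gives the $\pi$-colored SLP for both, and Lemma~\ref{4.4} concludes this case.

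Now suppose $\Delta$ has no missing triangle. My aim is to find a blue edge $\{p,q\}$ satisfying the hypotheses of Lemma~\ref{4.5} for which $\cont(\Delta)$ also satisfies condition~(L); the induction hypothesis applied to $\cont(\Delta)$ combined with Lemma~\ref{4.5} then yields the $\pi$-colored SLP for $\Delta$. Since every red vertex $v$ has a link that is an all-blue cycle (all neighbors of $v$ being blue by semi-properness), every blue edge $\{p,q\}$ in such a link is automatically adjacent to the red triangle $\{p,q,v\}$, so taking $s=v$ makes the red-third-vertex hypothesis immediate. The remaining requirements---that the other adjacent triangle $\{p,q,t\}$ satisfies $\{s,t\}\notin\Delta$ (needed for the induced-subcomplex condition of Lemma~\ref{4.5}) and that $\cont(\Delta)$ again satisfies~(L)---cut down the set of usable edges.

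The main obstacle is exhibiting such an edge. Contracting $\{p,q\}$ decreases the number of blue--blue edges by $1+c$, where $c$ is the number of common blue neighbors of $p$ and $q$ in $\Delta$; condition~(L) can fail in $\cont(\Delta)$ only when there is a set $W$ of blue vertices with $p,q\in W$ and $e(\Delta_W)=2|W|-3$ (a ``tight'' set in the Laman sense) in which the common blue neighbors of $p,q$ are under-represented. To settle this, I would analyze the tight sets---using that the union of two tight sets sharing at least two vertices is again tight (a submodularity feature of the Laman count)---together with the combinatorial constraints imposed by $\Delta$ being a $2$-sphere and $\pi$ being semi-proper. The heuristic is that if every red-vertex link contained only ``bad'' edges (those whose contraction destroys~(L)), the resulting profusion of tight blue subsets would force cumulative edge counts exceeding the Laman bound somewhere, contradicting~(L) on $\Delta$. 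Carrying out this combinatorial existence argument is the technical heart of the proof.
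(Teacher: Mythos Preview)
Your inductive skeleton matches the paper's: base case, missing-triangle case via Lemma~\ref{missingface} and Lemma~\ref{4.4}, and no-missing-triangle case via an edge contraction and Lemma~\ref{4.5}. The gap is that you do not actually produce the good edge; you only describe a heuristic (``profusion of tight blue subsets would force cumulative edge counts exceeding the Laman bound''), and that heuristic as stated is not a proof. The paper closes this gap with a concrete two-case argument that your proposal does not anticipate.

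In Case~(i), $\Delta$ has \emph{no} all-blue facet. Then, since $\Delta$ also has no missing triangle, every induced blue subgraph $\Delta_W$ is triangle-free, and Euler's formula for planar triangle-free graphs gives $e(\Delta_W)\le 2|W|-4$ whenever $|W|\ge 3$. This one-unit slack means that contracting \emph{any} blue edge preserves~(L), so you may pick any edge in the link of a red vertex. Your proposal misses this case entirely, and without it your ``every blue edge might be bad'' counting heuristic has no anchor.

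In Case~(ii), $\Delta$ has an all-blue facet. Here the paper first shows (via the dual graph on blue facets and an induced-cycle count) that some blue facet $T=\{v_1,v_2,v_3\}$ has two edges $\{v_1,v_3\},\{v_2,v_3\}$ each bordering a facet with a red third vertex. Then it argues that if both $\mathcal C_{v_1\to v_3}(\Delta)$ and $\mathcal C_{v_2\to v_3}(\Delta)$ violate~(L), the offending Laman-tight sets $B_1,B_2$ either overlap in $\ge 2$ vertices (and submodularity gives a contradiction, as you intuit) or meet only in $v_3$, in which case a nested induction on the \emph{size of a maximal} Laman set containing $v_1,v_3$ but not $v_2$ eventually terminates. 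Your submodularity remark is the right ingredient for the overlapping case, but you have no mechanism for the disjoint case, and the paper's descending-chain argument on maximal Laman subsets is what handles it.

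One minor point: your worry about the induced-subcomplex hypothesis in Lemma~\ref{4.5} (that $\{s,t\}\notin\Delta$) is unnecessary. If $\Delta$ has no missing triangle and $\{s,t\}\in\Delta$, then $\{p,s,t\}$ and $\{q,s,t\}$ are facets, forcing $\Delta$ to be the tetrahedron boundary, which is the base case.
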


\begin{proof}
We proceed by induction on $|V|$ for $V=V(\Delta)$.
If $|V|=4$, then $\Delta$ is the boundary of a tetrahedron and as (L) holds $\Delta$ has $3$ blue vertices and one red vertex w.r.t.\ $\pi$. One readily verifies that $\Delta$ has the $\pi$-colored SLP.

Assume $|V|>4$.
If $\Delta$ has a missing triangle, then, by Lemma \ref{missingface}, $\Delta$ decomposes as a connected sum $\Delta=\Gamma_1\#_\sigma \Gamma_2$. In this case, $\pi$ induces a semi-proper bi-coloring on each $\Gamma_i$, and clearly (L) holds for each $\Gamma_i$. Hence, by the induction hypothesis, each $\Gamma_i$ has the $\pi$-colored SLP, and thus, by Lemma~\ref{4.4}, so has $\Delta$.

Thus, assume $\Delta$ has no missing triangle.
Then, for any edge $\{p,q\} \in \Delta$, $\Delta \to \cont(\Delta)$ is an admissible contraction. Moreover, if $p$ and $q$ are blue vertices,
then $\pi$ induces a semi-proper coloring of $\cont(\Delta)$.
We will show that there is a facet $\{p,q,r\}$ with $\pi(p)=\pi(q)=b$ and $\pi(r)=r$ such that the complex $\cont(\Delta)$ satisfies (L).
Then, by the induction hypothesis, $\cont(\Delta)$ has the $\pi$-colored SLP, and thus, by Lemma~\ref{4.5} so has $\Delta$, as desired.

%
%

We distinguish two cases: whether $\Delta$ contains a blue facet (i.e., a facet all of whose vertices are blue) or not.

\textbf{Case (i)}: Assume $\Delta$ has no blue facet.
Recall that $\Delta$ has no missing triangle either.
Then, for every subset $W$ of blue vertices,
the $1$-skeleton of $\Delta_W$ has no $3$-cycles, thus
by Euler's formula
$e(\Delta_W)\leq 2|W|-4$, whenever $|W|\geq 3$.
Since $e(\cont(\Delta)_W) =e(\Delta_W)$ if $q \not \in W$ and $e(\cont(\Delta)_W) \leq e(\Delta_{W \cup \{p\}})-1$ if $q \in W$,
this implies that condition (L) holds in $\cont(\Delta)$ for \emph{any} blue edge $\{p,q\}\in \Delta$. 

\textbf{Case (ii)}: Assume $\Delta$ has a blue facet.
We first show that there is a blue facet $T=\{v_1,v_2,v_3\}$ such that
there exist red vertices $v_1',v_2'$ (possibly $v_1'=v_2'$) with $\{v_1,v_1',v_3\},\{v_2,v_2',v_3\} \in \Delta$.
Then we proceed to show that either $\mathcal{C}_{v_1 \to v_3}(\Delta)$ or $\mathcal{C}_{v_2 \to v_3}(\Delta)$ satisfies (L), for \emph{some} such choice.

Suppose to the contrary that there is no blue facet $T$ satisfying the above condition.
This means that each blue facet is adjacent to at least two blue facets in the dual graph of $\Delta$.
Consider the graph $G$ whose vertices are the blue facets of $\Delta$ and two facets $\sigma,\tau$ are adjacent if their intersection is an edge of $\Delta$.
Then each vertex of $G$ has degree at least two, and therefore $G$ has an induced cycle $\sigma_1,\dots,\sigma_k$.
Let $\Gamma=\langle \sigma_1,\dots,\sigma_k\rangle$ and $W=V(\Gamma)$.
Then, since we take an induced cycle in $G$,
there are exactly $k$ edges, which are contained in two facets in $\Gamma$, which implies $e(\Gamma)=3k-k=2k$.
Also, since $|V(\Gamma)|=|V(\langle \sigma_1,\dots,\sigma_{k-1}\rangle)|$ and $|V(\langle \sigma_1,\dots,\sigma_{i}\rangle)|-|V(\langle \sigma_1,\dots,\sigma_{i-1}\rangle)|\leq 1$ for $i<k$,
we have $|V(\Gamma)| \leq k+1$.
Thus we have $e(\Delta_W) \geq e(\Gamma) =2k \geq 2|W|-2$ which contradicts (L).

Let $T,v_1,v_2,v_3, v'_1,v'_2$ be as guaranteed above.
Next we show that either at least one of the complexes $C_1:=\mathcal{C}_{v_1 \to v_3} (\Delta)$ and $C_2:=\mathcal{C}_{v_2 \to v_3}(\Delta)$ satisfies (L), or we are in a situation that allows an inductive argument to find some other choice as above for which one of $C_1$ and $C_2$ does satisfy (L).
Assume both $C_1$ and $C_2$ violate (L).
Then for $i=1,2$, there is a subset of blue vertices $B'_i$ in $C_i$ with $e((C_i)_{B'_i})>2|B'_i|-3$. In particular, the vertex $v_3$ is in $B'_i$, and for the set $B_i=B'_i\cup\{v_i\}\subseteq V$
we must have
(i) $e(\Delta_{B_i})=2|B_i|-3$ and (ii) $v_{3-i}$ is not in $B_i$;
this is because $\Delta_{B_i}$ satisfies the inequality in (L) and $(C_i)_{B'_i}$ violates it.

Consider the union $B=B_1 \cup B_2$.
Now we count edges in $\Delta_{B_1}\cup \Delta_{B_2}$:
if $|B_1\cap B_2|\geq 2$, then
\begin{align*}
e(\Delta_{B_1}\cup \Delta_{B_2}) =&e(\Delta_{B_1})+e(\Delta_{B_2})-e(\Delta_{B_1\cap B_2})\\
=& 2|B_1|-3 + 2|B_2|-3 - e(\Delta_{B_1\cap B_2})\\
\geq &2|B_1|-3 + 2|B_2|-3 - (2|B_1\cap B_2|-3)
= 2|B|-3.
\end{align*}
The edge $\{v_1,v_2\}$ is in the $1$-skeleton of $\Delta_B$ but not in $\Delta_{B_1}\cup \Delta_{B_2}$.
Thus $\Delta_B$ violates the inequality in (L), a contradiction.
This completes the proof, unless $|B_1\cap B_2|\leq 1$, in which case $B_1\cap B_2=\{v_3\}$.

Call a subset $U$ of $V$ \emph{Laman} if the complex $\Delta_{U}$ satisfies (L) and $e(\Delta_U)=2|U|-3$. Then $B_1$ is Laman, $v_1,v_3\in B_1$ and $v_2\notin B_1$ (so $C_1$ violates (L)); let $B_1$ be of maximal size with these properties.

Next, we show that there is a blue facet $T"=\{u_1,u_2,u_3\}\subseteq B_1$ such that each of the edges $u_1u_3$ and $u_2u_3$ is contained in a facet whose third vertex is red; note that none of these two edges is $v_1v_3$.
Indeed,
in order to apply the argument used in Case (ii), to $\Delta_{B_1}$ rather than to $\Delta$, what we need to verify is that if $F\subseteq B_1$ is a blue facet adjacent in $\Delta$ to another facet $F'$, and $\{z\}=F'\setminus F$, then $z\in B_1$. Now, if $z\notin B_1$, then $B_1\cup \{z\}$ is Laman,
so by maximality of $B_1$ we must have $z=v_2$, but one of the edges $v_2v_3, v_2v_1$ is not in $\Delta_{B_1}\cup F'$, thus $\Delta_{B_1\cup \{z\}}$ violates (L), a contradiction.

As argued before, if both $\mathcal{C}_{u_1 \to u_3} (\Delta)$ and $\mathcal{C}_{u_2 \to u_3} (\Delta)$ violate (L), then there exist for $i=1,2$ Laman subsets $B_i''$ with $u_i,u_3\in B_i''$ and $u_{3-i}\notin B_i''$. If $|B_1''\cap B_2''|\geq 2$, then the claim follows by the same computation as above. So, assume $B_1''\cap B_2''=\{u_3\}$.
Next we show that in this case $B_i''\subset B_1$, for at least one of $i=1,2$; the inclusion is strict.

Note that $|B_i''\cap B_1|\geq 2$. Thus, a count of edges, similar to the above in Case (ii), gives that $B_i''\cup B_1$ is Laman with $\Delta_{B_i''\cup B_1}$ and $\Delta_{B_i''}\cup \Delta_{B_1}$ having the same $1$-skeleton. By the maximality of $B_1$, for each of $i=1,2$, either $B_i''\subset B_1$ (with strict containment as $u_{3-i}\in B_1\setminus B_i''$) or $v_2\in B_i''$. The latter case cannot happen for both $i=1,2$ as $u_3\neq v_2$; thus we can assume $B_1''\subset B_1$, and we choose such $B_1''$ of maximal size.

As $|B_1''|<|B_1|$, by iterating this argument for $B_1''$ and inductively, we conclude that at some point an edge $\{x,y\}$ is found that is contained in a unique blue facet and such that $\mathcal{C}_{x \to y}(\Delta)$ satisfies (L).
This completes the proof.
\end{proof}


\bigskip
\bigskip
\noindent
\textbf{Acknowledgments}:
We thank Yusuke Suzuki for letting us know Batagelj's result.
A substantial part of
this work was done during
the workshop ``Lefschetz Properties and Artinian Algebras'' at Banff International Research Station (BIRS) from March 13 to 18, 2016.
We thank the organizers of the workshop and BIRS for their kind invitation and warm hospitality.


\begin{thebibliography}{1}


\bibitem[BN]{Babson-Nevo}
E. Babson and E. Nevo,
Lefschetz properties and basic constructions on simplicial spheres,
J. Algebraic Combin., vol. {\bf 31} (1), 111--129, 2010.


\bibitem[BD]{BD}
B. Bagchi and B. Datta,
Minimal triangulations of sphere bundles over the circle,
J. Combin. Theory Ser. A, vol. {\bf 115} (5), 737--752, 2008.


\bibitem[Bar]{Bar-RP2}
D. Barnette,
Generating the triangulations of the projective plane,
J. Combin. Theory Ser. B, vol. {\bf 33} (3), 222--230, 1982.

\bibitem[Bat]{Ba}
V. Batagelj,
Inductive definition of two restricted classes of triangulations,
Discrete Math., vol {\bf 52}, 113--121, 1984.

\bibitem[GY]{Golovina-Yaglom}
L. I. Golovina and I. M. Yaglom,
Induction in Geometry, Topics in Mathematics, D.C. Heath and Company, Boston, 1963. (English translation from Russian.)



\bibitem[HMMNWW]{book:Lefschetz}
T. Harima, T. Maeno, H. Morita, Y. Numata, A. Wachi and J. Watanabe,
{\em The Lefschetz properties}, Lecture Notes in Mathematics 2080, Springer, 2013.

\bibitem[La]{Laman}
G. Laman,
On graphs and rigidity of plane skeletal structures,
J. Engrg. Math., vol. {\bf 4}, 331--340, 1970.


\bibitem[Nev]{Nevo-VK}
E. Nevo,
Higher minors and Van Kampen's obstruction,
Math. Scand., vol. {\bf 101}, 161--176, 2007.


\bibitem[St1]{Stan:79}
R.P. Stanley,
Balanced Cohen-Macaulay complexes,
\textit{Trans.\ Amer.\ Math.\ Soc.} \textbf{249} (1979), 139--157.

\bibitem[St2]{Stan:Gr}
R.P. Stanley,
{\em Combinatorics and Commutative Algebra, Second Edition},
Birkh\"auser, 1996.

\end{thebibliography}
\end{document}